\newcommand\myshade{85}
\colorlet{mylinkcolor}{violet}
\colorlet{mycitecolor}{YellowOrange}
\colorlet{myurlcolor}{Aquamarine}
\newtheorem{Th}{Theorem}[section]
\newtheorem{Lem}[Th]{Lemma}
\newtheorem{Cor}[Th]{Corollary}
\theoremstyle{remark}
\newtheorem{Rem}[Th]{Remark}
\newtheorem{Ex}[Th]{Example}
\newcommand{\vp}{\varphi}
\newcommand{\eps}{\varepsilon}
\newcommand{\R}{\mathbb{R}}
\newcommand{\Z}{\mathbb{Z}}
\newcommand{\cC}{{\mathcal C}}
\newcommand{\cD}{{\mathcal D}}
\newcommand{\cE}{{\mathcal E}}
\newcommand{\cF}{{\mathcal F}}
\newcommand{\cH}{{\mathcal H}}
\newcommand{\cI}{{\mathcal I}}
\newcommand{\cN}{{\mathcal N}}
\newcommand{\al}{\alpha}
\newcommand{\Ga}{\Gamma}
\newcommand{\weakto}{\rightharpoonup}
\numberwithin{equation}{section}
\DeclareMathOperator*{\essinf}{ess\,inf}
\newcommand{\supp}{\mathrm{supp}\,}
\newcommand{\loc}{\mathrm{loc}}
\newcommand{\per}{\mathrm{per}}
\begin{document}

\nocite{*}

\title[Semirelativistic Choquard equations]{Semirelativistic Choquard
  equations with singular potentials and general nonlinearities
  arising from Hartree-Fock theory}

\author{Federico Bernini}
\address{Dipartimento di Matematica e Applicazioni, Università
  degli Studi di Milano-Bicocca, via Roberto Cozzi 55, I-20125,
  Milano, Italy}
\email{f.bernini2@campus.unimib.it}

\author{Bartosz Bieganowski} \address{Institute of Mathematics, Polish
  Academy of Sciences, ul. \'Sniadeckich 8, 00-656 Warsaw, Poland and
  Nicolaus Copernicus University, Faculty of Mathematics and Computer
  Science, ul. Chopina 12/18, 87-100 Toru\'n, Poland}
\email{bbieganowski@impan.pl, bartoszb@mat.umk.pl}

\author{Simone Secchi} \address{Dipartimento di Matematica e
  Applicazioni, Università degli Studi di Milano-Bicocca, via Roberto
  Cozzi 55, I-20125, Milano, Italy} \email{simone.secchi@unimib.it}
\date{\today}
\subjclass{35Q55, 35A15, 35J20, 58E05}

\begin{abstract} 
We are interested in the general Choquard equation
\begin{multline*}
  \sqrt{\strut -\Delta + m^2} \ u - mu + V(x)u - \frac{\mu}{|x|} u
  = \\
  \left( \int_{\R^N} \frac{F(y,u(y))}{|x-y|^{N-\alpha}} \, dy \right)
  f(x,u) - K (x) |u|^{q-2}u
\end{multline*}
under suitable assumptions on the bounded potential \(V\) and on the
nonlinearity \(f\). Our analysis extends recent results by the second
and third author on the problem with $\mu = 0$ and pure-power
nonlinearity $f(x,u)=|u|^{p-2}u$. We show that, under appropriate
assumptions on the potential, whether the ground state does exist or
not. Finally, we study the asymptotic behaviour of ground states as
$\mu \to 0^+$.
%
%
%
\end{abstract}

\maketitle

\tableofcontents

\section{Introduction}
\setcounter{section}{1}

In a model for an atom with \(n\) electrons and nuclear charge \(Z\), the kinetic energy of the electrons is described by the expression
\begin{gather*}
\sqrt{(|\mathbf{p}|c)^2 + (mc^2)^2} - mc^2.
\end{gather*}
This model takes into account some relativistic effects, and gives
rise to a Hamiltonian of the form
\begin{gather*}
\mathcal{H} = \sum_{j=1}^n \left\{ \sqrt{-\alpha^{-2} \Delta_j + \alpha^{-4}} - \alpha^{-2} - \frac{Z}{|\mathbf{x}_j|} \right\} + \sum_{1 \leq i < j \leq n} \frac{1}{|\mathbf{x}_i-\mathbf{x}_j|},
\end{gather*}
where \(\alpha\) is Sommerfeld's fine structure constant,
\(\alpha \approx 1/137.036\), see \cite{DASS} and \cite{RGN}. As a
particular case one can consider a one-electron atom like a hydrogen,
for which \(n=1\).

Motivated by this application (with a different scale of unit
measures) we are interested in the general Choquard equation
\begin{multline}
  \sqrt{\strut -\Delta + m^2} \ u - mu + V(x)u - \frac{\mu}{|x|} u
   \\
  =\left( \int_{\R^N} \frac{F(y,u(y))}{|x-y|^{N-\alpha}} \, dy \right)
  f(x,u) - K (x) |u|^{q-2}u \label{eq:1.1}
\end{multline}
where $u \in H^{1/2} (\R^N)$, $V \in L^\infty (\R^N)$ is an external
potential, $m > 0$ and $\mu \neq 0$. The function $f$ is a general
nonlinearity with~$F(x,t):=\int_0^t f(x,s)\, ds$. Observe that in the
equation we consider also the singular part of the potential, i.e.
$ - \frac{\mu}{|x|}$, which is singular at the origin and does not
belong to $L^\infty (\R^N)$ nor $L^k (\R^N)$ for $k \geq 1$. Moreover it does not belog to the Kato's class and cannot be treated as a lower order perturbation of the nonlocal operator $\sqrt{-\Delta + m} -m + V(x)$ in $L^2 (\R^N)$.

Equation \eqref{eq:1.1} arises from the \emph{ansatz}
\begin{equation*}
\Psi(t, x) = e^{it \lambda} u(x)
\end{equation*}
in the time-dependent, pseudorelativistic Hartree problem
\begin{multline}\label{eq:time-dep}
  \mathrm{i} \frac{\partial \Psi}{\partial t} = \left( \sqrt{-\Delta +
      m^2} - m \right) \Psi + \left( V(x) - \frac{\mu}{|x|} +\lambda
  \right) \Psi \\ - \left( I_\alpha * F(x, |\Psi|) \right) f(x,
  |\Psi|) + K(x) |\Psi|^{q-2} \Psi
\end{multline}
or (up to some normalization constant $c_{N, \alpha}$) in the following fractional version of semirelativistic Schr\"odinger-Poisson system
\begin{equation*}
  \left\{ \begin{array}{l}
            \mathrm{i} \frac{\partial \Psi}{\partial t} = \left( \sqrt{-\Delta + m^2} - m \right) \Psi + \left( V(x) - \frac{\mu}{|x|} +\lambda  \right) \Psi  - \Phi f(x, |\Psi|) + K(x) |\Psi|^{q-2} \Psi, \\
            (-\Delta)^{\alpha / 2} \Phi = c_{N, \alpha} F(x, |\Psi|).
\end{array} \right.
\end{equation*}
%
These equations appear in quantum theory for large systems of
self-interacting bosons with prescribed mass~$m > 0$. In particular,
they appear in models in astrophysics describing the evolution of
many-body quantum systems, like boson stars. The external potential
accounts for gravitational fields from other stars. It is also
possible to describe the evolution of other type stars, like white
dwarfs or neutron stars, using the time-dependent equation of the form
\eqref{eq:time-dep}. In particular, in \cite{F1} the collapse of white
dwarfs has been studied via the analysis of existence and blow-up of
solutions to the Hartee and Hartree-Fock equations. See also \cite{ES,
  F2, F3, F4} for more details on these physical models.

In \cite{FL} Fefferman and de la Llave showed how a system governed by
operator $\cH$ can implode: in fact, this is happen for a single
quantized electron attracted to a single nucles of charge $Z$ fixed at
the origin.  In \cite{LY2} Lieb and Yau studied the quantum mechanical
many-body problem where they consider the problem where electrons and
fixed nuclei interact via Coulomb forces with a relativistic kinect
energy: they proved that stability of relatve matters occurs for
suitable values $Z$ and $\al$.  In \cite{LY1}, the same authors
consider operator $\cH$ with $Z=0$, that is electrically neutral
gravitating particles (e.g. fermions or bosons) and they showed that
the ground state of stars can be obtained as the limit  $G$ (the
gravitation constant) goes to zero and $n$ (the number of particles)
goes to infinity. We refer to \cite{Lieb,LS,LT} for further results.

\vspace{1cm}

The stationary Schr\"odinger equation with singular potential of the
form
\begin{equation}\label{schrodinger}
  -\Delta u + V(x) u - \frac{\mu}{|x|^2} u = f(x,u), \quad x \in \R^N
\end{equation}
has been studied in \cite{GM}. In their paper, Guo and Mederski show
that \eqref{schrodinger} admits a ground state solution for
sufficiently small~$\mu >0$ in a strongly indefinite setting by means
of a linking-type argument connected with the classical Hardy
inequality in $H^1 (\R^N)$. They were able to study the strongly
indefinite case, i.e. the situation where the infimum of the spectrum
$\sigma (-\Delta + V(x))$ lies below 0, since the nonlinear part
$I(u) = \int_{\R^N} F(x,u) \, dx$ of the variational functional
$J \colon H^1(\mathbb{R}^N) \to \mathbb{R}$ defined by
\begin{gather*}
J(u) = \frac12 \int_{\R^N} |\nabla u|^2 + V(x) u^2 \, dx -
\frac{\mu}{2} \int_{\R^N} \frac{u^2}{|x|^2} \, dx - \int_{\R^N} F(x,u)
\, dx
\end{gather*}
is nonnegative. Later, this result has been extended in \cite{B} in the
fractional setting for positive potentials $V$, but with sign-changing
nonlinearities. The approach in \cite{B} is based on the Nehari
manifold technique, combined with recent results on the fractional
Hardy inequality.

\vspace{1cm}

Turning to our work, we study the Choquard equation \eqref{eq:1.1}
where the classical laplacian $-\Delta$ is replaced by the nonlocal
operator $\sqrt{-\Delta + m^2}$, which is known in the literature as
the semirelativistic Schr\"odinger operator. The second difficulty is
that the nonlinear part is nonlocal and has additional power-type
term, which makes the right hand side to change sign. As a consequence
we will study only positive potentials $V$.

The problem \eqref{eq:1.1} with $\mu = 0$ and $K \equiv 0$ has been
widely studied with the pure-power type nonlinearity $f$
(see~\cite{MVS1}) and also with general nonlinearity (see~\cite{BVS,
  MVS2}). The case $\mu = 0$ with $K \not\equiv 0$ and the pure-power
nonlinearity $f$ has been studied by the second and third author in
\cite{BS}. See also \cite{CS1, CS2, CZN1, CZN2, dASS,
    Le, MZ, S, SCV} and references therein.

\vspace{1cm}
  
We consider the following relation between numbers $p$, $q$, $\alpha$ and
the dimension $N$.
\begin{enumerate}
\item[(N)] $N \geq 2$, $(N-1)p - N < \alpha < N$,
  $2 <q < \min \{2p, 2N / (N-1)\}$ and $p > 2$.
\end{enumerate}
Note that, in particular, $p < \frac{2N}{N-1}$, so that the growth
parameter $p$ is smaller than the \emph{critical Sobolev exponent} for
the space $H^{1/2}(\R^N)$. In this sense we consider a nonlinearity
$f$ with subcritical growth, see assumption~(F1) below.

The following are our assumptions on the potential function $V$:
\begin{enumerate}
\item[(V1)] $V = V_p + V_l$, where $V_p \in L^\infty (\R^N)$ is
  $\mathbb{Z}^N$-periodic and 
  $V_l \in L^\infty (\R^N) \cap L^N (\R^N)$ satisfies
\begin{align*}
  \lim_{|x|\to+\infty} V_l (x) = 0.
\end{align*}
\item[(V2)]$\essinf_{x \in \R^N} V(x) > m$.
\end{enumerate}

Conditions (V1) and (V2) ensure that the operator
$\sqrt{-\Delta + m^2} + V(x) - m$ is positive definite. In particular,
the quadratic form associated to this operator generates a norm in
$H^{1/2} (\R^N)$ which is equivalent to the standard one. Similar
assumptions were considered in \cite{BS} in the case $\mu = 0$ in the
presence of the pure power nonlinearity $f(x,u) =
|u|^{p-2}u$. However, for $\mu > 0$ it is unclear whether the operator
$\sqrt{-\Delta + m^2} + V(x) - m - \frac{\mu}{|x|}$ is positive
definite, and we can show it only for small values of $\mu$. 
Moreover, the potential $V$ is not necessarily $\Z^N$-periodic, and the
application of Lions' concentration-compactness principle is not
straightforward.

With respect to the nonlinearity \(f\) we assume:
\begin{enumerate}
\item[(F1)] $f\colon \R^N \times \R \rightarrow \R$ is a
  Carath\'eodory function\footnote{We say that $f\colon \R^N \times \R \rightarrow \R$ is a Carath\'eodory function if $f(\cdot, u)$ is measurable for every $u\in\R$ and $f(x, \cdot)$ is continuous for a.e. $x \in \R^N$.}, $\Z^N$-periodic in $x \in \R^N$ and there
  is $C > 0$ such that
\begin{align*}
|f(x,u)| \leq C \left( |u|^{\frac{\alpha}{N}} + |u|^{p-1} \right).
\end{align*}
\item[(F2)] $f(x,u) = o(u)$ as $u \to 0$ uniformly with respect to
  $x$.
\item[(F3)] $F(x,u)/|u|^\frac{q}{2} \to +\infty$ as $|u| \to +\infty$, uniformly
  with respect to $x$, where $F(x,u) = \int_0^u f(x,s) \, ds$ and
\begin{align*}
  F(x,u) \geq 0
\end{align*}
for $u \in \R$ and a.e. $x \in \R^N$.
\item[(F4)] The function $u \mapsto f(x,u)/|u|^{\frac{q-2}{2}}$ is
  non-decreasing on each half-line $(-\infty, 0)$ and $(0,+\infty)$.
\end{enumerate}
  
Finally,
\begin{enumerate}
\item[(K)] $K \in L^\infty(\R^N)$ is $\Z^N$-periodic and
  non-negative.
\end{enumerate}

\begin{Rem}
Assumptions (F3) and (F4) imply that
\begin{align}\label{AR-q}
0 \leq \dfrac{q}{2} F(x,u) \leq f(x,u)u
\end{align}
for almost every $x \in \R^N$ and $u \in \R$, which is a weaker
variant of the well-known Ambrosetti-Rabinowitz condition. It is also
classical to check that conditions (F1), (F2) and (F3) imply that for any
$\eps > 0$ there is $C_\eps > 0$ such that
\begin{equation} \label{F-eps}
  F(x,u) \leq \eps|u|^2 + C_\eps |u|^p
\end{equation}
\end{Rem}
\begin{Ex}
  One can easily check that the pure power nonlinearity
  $f(x,u) = |u|^{p-2}u$ satisfies (F1)--(F4) as soon as (N) holds
  true.
  \end{Ex}
\begin{Ex}
Consider $f(x,u) = L(x) u \log (1+|u|^{p-2})$, with
$\Z^N$-periodic $L \in L^\infty (\R^N)$, $\inf_{x \in \R^N} L(x) >
0$. It is clear that (F1) and (F2) are satisfied. Note that it follows from (N)
 that
\begin{equation*}
q < \frac{2N}{N-1} \leq 4.
\end{equation*}
Hence, to get (F3) we use the L'H\^{o}pital's rule
\begin{align*}
  \lim_{|u| \to +\infty} \frac{F(x,u)}{|u|^{q/2}} &= \frac{2 L(x)}{q} \lim_{|u| \to +\infty} \frac{u \log (1+|u|^{p-2})}{|u|^{\frac{q}{2}-2}u} = \frac{2 L(x)}{q} \lim_{|u| \to +\infty} \frac{\log (1+|u|^{p-2})}{|u|^{\frac{q-4}{2}}} \\
                                                  &= \frac{2 L(x)}{q} \lim_{|u| \to +\infty} |u|^{\frac{4-q}{2}} \log (1+ |u|^{p-2}) = +\infty.
\end{align*}
To get the inequality $F(x,u) \geq 0$ note that for $u \geq 0$ we have
\begin{equation*}
F(x,u) = L(x) \int_0^u s \ln (1+s^{p-2}) \, ds \geq 0.
\end{equation*}
Note that $f$ is odd in $u$, and therefore $F$ is even in $u$. Hence
$F(x,u) = F(x,-u) \geq 0$ for $u < 0$. To obtain (F4) we note that
$f(x,u)/|u|$ is clearly non-decreasing on $(0, +\infty)$. Moreover
$f(x,u) \geq 0$ on $(0, +\infty)$. Hence
\begin{equation*}
\frac{f(x,u)}{|u|^{\frac{q-2}{2}}} = \frac{f(x,u)}{|u|} |u|^{\frac{4-q}{2}}
\end{equation*}
is non-decreasing on $(0,+\infty)$. We proceed similarly on
$(-\infty, 0)$.
\end{Ex}
\begin{Ex}
Suppose that $\tilde{f}$ satisfy (F1)--(F4) and, for simplicity, does not depend on $x \in \R^N$. It is clear from (F1), (F2), (F4) that $\tilde{f}(u) > 0$ on $(0,+\infty)$. Take $M > 1$ and set, for $u \geq 0$,
\begin{equation*}
f(x, u) =   \begin{cases}
L(x)\tilde{f}(u) &\text{if $u < 1$}, \\
L(x) \tilde{f}(1) u^{\frac{q-2}{2}} &\text{if $1 \leq u \leq M$}, \\
L(x) \frac{M^{\frac{q-2}{2}} \tilde{f}(1)}{\tilde{f}(M)} \tilde{f}(u) & \text{if $u > M$}
\end{cases} 
\end{equation*}
and $f(x,u) = f(x,-u)$ for $u < 0$, where $L \in L^\infty (\R^N)$ is
$\Z^N$-periodic and $\inf_{x \in \R^N} L(x) > 0$. Then we can easily
check that (F1)--(F4) are satisfied. Moreover, on $[1,M]$ the function
$f$ is sublinear, since $\frac{q-2}{2} \leq 1$.
\end{Ex}
\bigskip

We define the semirelativistic operator $\sqrt{-\Delta + m^2}$ by
means of the Fourier transform, given by the symbol
$\sqrt{|\xi|^2 + m^2}$, i.e. for any rapidly decaying function $u$ we
define that $\sqrt{-\Delta + m^2} u$ is defined by
\begin{equation*}
\cF \left(\sqrt{-\Delta + m^2} u \right) = \sqrt{|\xi|^2 + m^2} \cF(u),
\end{equation*}
where $\cF$ is the Fourier transform on $L^2 (\R^N)$.

Our first result shows that equation \eqref{eq:1.1} possesses a
least-energy solution as long as the parameter $\mu$ is sufficiently
small.
\begin{Th}\label{ThMain1}
  Suppose that (N), (V1), (V2), (F1)--(F4), (K) are satisfied. There
  exists $\mu^* > 0$ such that for all $\mu \in (0, \mu^*)$ and any
  $V_{l}$ satisfying
\begin{equation}\label{a1}
V_{l} (x)  < \frac{\mu}{|x|} \mbox{ for a.e. } x \in \R^N \setminus \{0\},
\end{equation}
there is a ground state solution $u \in H^{1/2} (\R^N)$ of
\eqref{eq:1.1}. The constant 
\begin{equation*}
\mu^* = \mu^* (N) := 2 \frac{\Ga \left( \frac{N+1}{4}\right)^2}{\Ga \left( \frac{N-1}{4} \right)^2},
\end{equation*} 
where $\Ga$ denotes the Euler $\Ga$-function, depends only on the
dimension of the space $N$, but is independent of
the potential $V$ or of the nonlinearity $f$.
\end{Th}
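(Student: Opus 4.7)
The plan is to recast \eqref{eq:1.1} as a critical-point problem for the energy functional
\begin{equation*}
J(u) = \tfrac{1}{2} Q_\mu(u) - \tfrac{1}{2}\int_{\R^N}\!\!\left( I_\alpha * F(\cdot,u)\right) F(x,u)\, dx + \tfrac{1}{q}\int_{\R^N} K(x)|u|^q\, dx,
\end{equation*}
where $Q_\mu(u)=\int_{\R^N} u(\sqrt{-\Delta+m^2}-m)u + V(x)u^2 - \tfrac{\mu}{|x|}u^2\, dx$, on the Hilbert space $H^{1/2}(\R^N)$. The constant $\mu^\ast$ is recognized as the reciprocal of the sharp constant in Herbst's fractional Hardy inequality $\int_{\R^N} |u|^2/|x|\, dx \leq (\mu^\ast)^{-1}\int_{\R^N}|\xi|\,|\cF u(\xi)|^2\, d\xi$. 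Using $\sqrt{|\xi|^2+m^2}\geq|\xi|$ together with (V2) (so that $V-m\geq \essinf V - m > 0$), the inequality implies that for $\mu<\mu^\ast$ one has
\begin{equation*}
Q_\mu(u) \geq (1-\mu/\mu^\ast)\int_{\R^N}|\xi|\,|\cF u|^2\, d\xi + (\essinf V - m)\|u\|_2^2,
\end{equation*}
so $Q_\mu$ defines an equivalent norm on $H^{1/2}(\R^N)$. The Hardy-Littlewood-Sobolev inequality together with (F1)--(F2) and (N) makes the Choquard term well defined and $C^1$, as does the $L^q$ term thanks to (K); standard arguments (see \cite{BS, MVS2}) give $J\in C^1$.

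Next I would verify the Nehari-manifold geometry under (F1)--(F4) and (K). Assumption (F4) together with (N) (which gives $q/2\leq 2$) is the standard monotonicity ensuring that the map $t\mapsto J(tu)$ has a unique positive critical point $t_u$ for every $u\neq 0$, and that the Nehari manifold $\cN=\{u\neq 0:\langle J'(u),u\rangle=0\}$ is a $C^1$ topological manifold. Via (F3) one shows that $J$ is coercive along Nehari rays, and the condition \eqref{AR-q} gives the usual decomposition $J(u)=\frac{1}{2}Q_\mu(u)-\frac{1}{2}\int(I_\alpha*F)F + \frac{1}{q}\int K|u|^q$ is bounded below on $\cN$ by the $H^{1/2}$-norm modulo subcritical terms; the $c:=\inf_\cN J>0$ exists. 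A minimizing sequence $(u_n)\subset\cN$ is bounded in $H^{1/2}(\R^N)$ by coercivity and by the growth bound \eqref{F-eps}.

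The heart of the proof is compactness. By a fractional Lions-type lemma combined with the subcritical growth in (N), the sequence $(u_n)$ cannot vanish, hence there exist $y_n\in\R^N$ and $\delta>0$ with $\int_{B(y_n,1)}|u_n|^2\, dx\geq\delta$. One distinguishes the case $(y_n)$ bounded, where $u_n\weakto u$ weakly in $H^{1/2}(\R^N)$, the $L^N$-decay of $V_l$ in (V1) makes the perturbative term weakly continuous, and a Brezis-Lieb-type splitting gives $u\neq 0$ solving \eqref{eq:1.1} with $J(u)\leq c$; from the case $|y_n|\to\infty$, one obtains a nontrivial solution $\wt u$ of the shifted autonomous \emph{limit} problem (with $\mu=0$, $V_l\equiv 0$, keeping only the $\Z^N$-periodic pieces $V_p$, $f$, $K$) at some level $c_\infty$. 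The translation-invariance of the periodic problem allows a comparison; the key is the strict inequality $c<c_\infty$.

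The main obstacle is precisely to prove $c<c_\infty$, which forces us into the translated case and rules out escape at infinity. To this end, let $w$ be a ground state of the limit problem and consider the path $t\mapsto J(tw)$ versus the corresponding periodic functional. Because of \eqref{a1}, for every $t>0$,
\begin{equation*}
\int_{\R^N}\!\!\Bigl(V_l(x)-\tfrac{\mu}{|x|}\Bigr)(tw)^2\, dx < 0,
\end{equation*}
while (K) and the nonnegativity of $F$ control the remaining terms, so $\max_{t>0}J(tw)<\max_{t>0}J_\infty(tw)=c_\infty$. Since $c\leq\max_{t>0}J(tw)$, strict inequality follows. This excludes mass loss at infinity, hence $u_n\to u$ up to translation absorbed into the bounded case, and $u$ is the desired ground state. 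Finally, smallness of $\mu$ is used only to guarantee $\mu<\mu^\ast$ so that $Q_\mu$ remains coercive, and the explicit value of $\mu^\ast$ is read off Herbst's sharp inequality, depending only on $N$.
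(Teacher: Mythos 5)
Your proposal follows essentially the same roadmap as the paper: identify $\mu^\ast$ with the sharp constant in Herbst's fractional Hardy inequality so that $Q_\mu$ yields an equivalent $H^{1/2}$-norm for $\mu<\mu^\ast$ (the paper does this via the Frank--Seiringer form of the inequality, Lemma~\ref{Hardy} and Lemma~\ref{norm-equiv}, which is the same constant after accounting for the factor $C(N,1/2)$ relating the Gagliardo seminorm to the $|\xi|$-multiplier); set up a Nehari-type level $c$; prove a profile decomposition for bounded sequences against the periodic limit functional $\cE_{\per}$ (defined in~\eqref{Eper}, i.e.\ $\mu=0$ and $V_l\equiv 0$); and rule out escape to infinity through the strict energy comparison $c<c_{\per}$ that comes directly from the sign condition~\eqref{a1}. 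Your fleshed-out argument for the strict inequality (take a periodic ground state $w$, scale onto the Nehari manifold of $\cE$, and note $\cE(tw)<\cE_{\per}(tw)$ for every $t>0$) is exactly the content behind the paper's terse claim $c_{\per}>c$, and is correct provided one grants existence of a periodic ground state, which both approaches quietly rely on (it follows from the $\mu=0$, $V_l\equiv 0$ case, e.g.\ \cite{BS}).

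There is one place where you are too optimistic. You assert that (F4) together with $q/2\leq 2$ makes $\cN$ a $C^1$ manifold with a unique positive fiber-maximizer $t_u$. But (F4) only requires $u\mapsto f(x,u)/|u|^{(q-2)/2}$ to be \emph{non}-decreasing, not strictly increasing, so the second-derivative nondegeneracy needed for $\cN$ to inherit a smooth manifold structure is not guaranteed, and neither is uniqueness of $t_u$ in full generality. The paper deliberately sidesteps this by invoking the abstract min-max theorem \cite[Theorem~5.1]{B-prep} under the weak inequality (J3); this produces a Cerami sequence at level $c$ without appealing to any manifold structure or Lagrange-multiplier machinery, and the profile decomposition Theorem~\ref{ThSplitting} is then applied to that sequence rather than to a Nehari minimizing sequence. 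If you want to keep your direct Nehari minimization, you would need either to strengthen (F4) to strict monotonicity or to replace the $C^1$-manifold step by a Szulkin--Weth or Ekeland-type argument that works under the weak monotonicity. Apart from this technical point, your approach and the paper's are the same.
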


\begin{Rem}
We emphasize the fact that we do not require $V_l (x) < 0$ for a.e.
$x \in \R^N$, and indeed $V_{l}$ may be positive in some neighborhood
of the origin.
\end{Rem}

\begin{Ex}
For $N=2$ the constant $\mu^*$ is equal to
\begin{equation*}
  \mu^* (2) = 2 \frac{\Ga \left( \frac{3}{4}\right)^2}{\Ga \left( \frac{1}{4} \right)^2} = 2 \frac{ \left( \frac{\sqrt{2} \pi}{\Ga \left( \frac{1}{4} \right)} \right)^2}{\Ga \left( \frac{1}{4} \right)^2} = \frac{4 \pi^2}{\Ga \left( \frac{1}{4} \right)^4} \approx 0.22847,
\end{equation*}
while for $N=3$ it equals
\begin{equation*}
  \mu^* (3) = 2 \frac{\Ga \left( \frac{4}{4}\right)^2}{\Ga \left( \frac{2}{4} \right)^2} =  \frac{2}{\pi} \approx 0.63662.
\end{equation*}
See Figure~\eqref{fig:1}.
\end{Ex}
\begin{figure} \label{fig:1}
\begin{center}
\includegraphics[width=5cm]{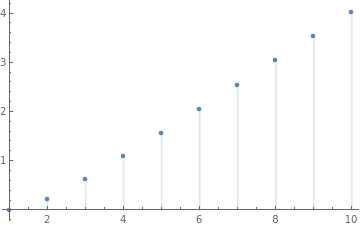}
\end{center}
\caption{Plot of $\mu^*(N)$ as $N$ ranges from $2$ to $10$.}
\end{figure}

Our second result describes a counterpart to Theorem \ref{ThMain1}
when \(\mu < 0\).
\begin{Th}\label{ThMain2}
Suppose that (N), (V1), (F1)--(F4), (K) and
\begin{itemize}
\item[(V2')] $\essinf_{x \in \R^N} V_p (x) > m$
\end{itemize}
are satisfied. If $\mu < 0$ and 
\begin{equation}\label{a2}
V_{l} (x) > \frac{\mu}{|x|} \quad\mbox{for a.e. $x \in \R^N \setminus \{0\}$},
\end{equation}
there are no ground states of \eqref{eq:1.1}.
\end{Th}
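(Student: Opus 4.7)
The plan is to argue by contradiction, comparing the prospective ground-state level $c_\mu$ of \eqref{eq:1.1} with the level $c_\per$ of the purely periodic problem obtained by discarding $V_l$ and $\mu$. Writing the associated functional as
\[
J_\mu(u)=J_\per(u)+\frac{1}{2}\int_{\R^N}\Bigl(V_l(x)-\frac{\mu}{|x|}\Bigr)u^2\,dx,
\]
assumption (V2') makes $\sqrt{-\Delta+m^2}-m+V_p$ positive definite on $H^{1/2}(\R^N)$, so by the same machinery developed for Theorem~\ref{ThMain1}, applied to the translation-invariant potential $V_p$ (in place of $V$) with $\mu=0$, the purely periodic problem admits a ground state $u_\per\in\cN_\per$ with $J_\per(u_\per)=c_\per>0$. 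Under (a2) and $\mu<0$ one has $V_l(x)-\mu/|x|>0$ almost everywhere, and the integral above is strictly positive whenever $u\not\equiv 0$, hence $J_\mu(u)>J_\per(u)$ for every nontrivial $u$.

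Next, I would establish $c_\mu=c_\per$. The minimax characterisation supplied by (F4) yields $c_\mu=\inf_{u\ne 0}\max_{t>0}J_\mu(tu)$ together with its periodic counterpart, and the pointwise comparison $J_\mu(tu)\ge J_\per(tu)$ gives $c_\mu\ge c_\per$ at once. For the reverse inequality I would pick $\{y_n\}\subset\Z^N$ with $|y_n|\to\infty$ and test with $u_\per(\cdot-y_n)$: since $V_p$, $f$, $K$ are $\Z^N$-periodic, $J_\per(u_\per(\cdot-y_n))=c_\per$, while
\[
\int_{\R^N}\Bigl(V_l(x+y_n)-\frac{\mu}{|x+y_n|}\Bigr)u_\per(x)^2\,dx\longrightarrow 0,
\]
because $V_l(\cdot+y_n)\to 0$ a.e.\ with dominated bound $\|V_l\|_\infty u_\per^2\in L^1$, and $1/|x+y_n|$ tends to zero locally uniformly while being globally controlled via the $H^{1/2}$-Hardy inequality. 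Taking $t_n>0$ with $t_n u_\per(\cdot-y_n)\in\cN_\mu$, standard Nehari arguments yield $t_n\to 1$ and therefore $c_\mu\le\max_{t>0}J_\mu(t u_\per(\cdot-y_n))\to J_\per(u_\per)=c_\per$.

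To conclude, suppose by contradiction that \eqref{eq:1.1} has a ground state $u_0\in H^{1/2}(\R^N)$, so $u_0\in\cN_\mu$ and $J_\mu(u_0)=c_\mu=c_\per$. By (F4) there is a unique $t_0>0$ with $t_0u_0\in\cN_\per$. Combining the strict comparison $J_\per<J_\mu$ on nontrivial functions with the Nehari maximisation property gives
\[
c_\per\le J_\per(t_0u_0)<J_\mu(t_0u_0)\le\max_{t\ge 0}J_\mu(tu_0)=J_\mu(u_0)=c_\mu=c_\per,
\]
the required contradiction. The main obstacles will be (i) producing the ground state $u_\per$ for the auxiliary periodic problem, which proceeds by arguments parallel to Theorem~\ref{ThMain1} but is technically cleaner since the singular term is absent, and (ii) the vanishing, along translations to infinity, of the Hardy-type integral $\int|x+y_n|^{-1}u_\per^2\,dx$ and the accompanying convergence $t_n\to 1$, where the fractional Hardy inequality in $H^{1/2}(\R^N)$ is the essential ingredient.
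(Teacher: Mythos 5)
Your argument is correct in outline and follows the same contradiction scheme as the paper: compare the prospective level $c$ of \eqref{eq:1.1} with the periodic level $c_{\per}$, use the strict pointwise inequality $\cE(u)>\cE_{\per}(u)$ (from $\mu<0$ and \eqref{a2}) together with the ground state $u_0$ to force $c>c_{\per}$, and then use translations towards infinity plus the Hardy/Vitali decay of the $V_l$ and $|x|^{-1}$ terms to get the opposite inequality. The one genuine detour is that you invoke the \emph{existence} of a periodic ground state $u_{\per}\in\cN_{\per}$ and translate that particular function; the paper instead translates an \emph{arbitrary} $u\in\cN_{\per}$, obtains $\cE_{\per}(u)\ge c+o(1)$, and takes the infimum over $\cN_{\per}$ afterwards, which gives $c_{\per}\ge c$ without ever needing $c_{\per}$ to be attained. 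Your route therefore requires an additional existence theorem for the purely periodic problem (deducible from the machinery of Sections 3--4 with $V_l\equiv 0$, $\mu=0$, using (V2') in place of (V2), but it is extra work), whereas the paper's route is self-contained. A second, smaller point: you assert $t_n\to 1$ for the Nehari projections of the translates, but this is both more than you need and not immediate; all that is required is $\sup_n t_n<\infty$, which follows from (F3) and Fatou exactly as in Lemma~\ref{inf_lim}, and then
\[
c\le \cE(t_n u_{\per}(\cdot-y_n))=\cE_{\per}(t_n u_{\per})+\frac{t_n^2}{2}\int_{\R^N}\Bigl(V_l-\frac{\mu}{|x|}\Bigr)u_{\per}(\cdot-y_n)^2\,dx\le c_{\per}+o(1)
\]
already closes the gap. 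With those two adjustments your proof matches the paper's.
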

Lastly, we focus on the parameter $\mu$, and we prove a compactness
result for ground states as \(\mu \to 0^+\).
\begin{Th}\label{ThMain3}
  Suppose that (N), (V1), (V2), (F1)--(F4), (K) are satisfied and
  $V_l \equiv 0$. Let $\{\mu_n\} \subset (0,\mu^*)$ be a sequence such
  that $\mu_n \to 0^+$. Then for any choice of ground states
  $u_n \in H^{1/2} (\R^N)$ of \eqref{eq:1.1} with $\mu=\mu_n$ there is
  a sequence of translations $\{ z_n \} \subset \Z^N$, such that
\begin{gather*}
u_n (\cdot - z_n) \weakto u_0, \mbox{ up to a subsequence, in } H^{1/2} (\R^N),
\end{gather*}
where $u_0 \in H^{1/2} (\R^N)$ is a ground state solution to
\eqref{eq:1.1} with $\mu = 0$. Moreover $c_n \to c$, where $c_n$ is the energy of $u_n$ and $c$ is the energy of $u$, where the energy is defined by \eqref{E} below.
\end{Th}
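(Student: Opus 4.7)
The plan is to show that $\{u_n\}$ is a bounded Palais-Smale sequence for the limiting functional $J_0$ associated to \eqref{eq:1.1} with $\mu = 0$, extract a non-vanishing weak limit via a $\Z^N$-translation, identify the limit as a ground state of $J_0$, and then deduce energy convergence. First I would establish boundedness of $\{u_n\}$ in $H^{1/2}(\R^N)$: by the uniform (in $\mu_n \in (0,\mu^*)$) coercivity of the quadratic form guaranteed by the Herbst inequality combined with \eqref{AR-q} and (F4), the Nehari-type estimates used in the proof of Theorem~\ref{ThMain1} give $\|u_n\|_{H^{1/2}} \leq C$. The levels $c_n$ are bounded above by $c$, the $\mu=0$ ground state level: for any $J_0$-ground state $w$,
\[
c_n \leq \max_{t>0} J_{\mu_n}(tw) \leq \max_{t>0} J_0(tw) = J_0(w) = c,
\]
since $J_{\mu_n} \leq J_0$ pointwise. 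Writing $J_0(u) = J_{\mu_n}(u) + \tfrac{\mu_n}{2}\int u^2/|x|\,dx$ and using Herbst's inequality with the boundedness of $\|u_n\|_{H^{1/2}}$, one obtains $J_0(u_n) - c_n = O(\mu_n)$ and, upon testing $J_0 - J_{\mu_n}$ against $v \in H^{1/2}(\R^N)$, $\|J_0'(u_n)\|_{(H^{1/2})^*} = O(\mu_n)$. Hence $\{u_n\}$ is a bounded Palais-Smale sequence for $J_0$ at level $c_\infty := \lim c_n$ along a subsequence.

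Because $V_l \equiv 0$, the coefficients $V = V_p$, $f(\cdot,u)$ and $K$ are all $\Z^N$-periodic, so $J_0$ is invariant under translations $u \mapsto u(\cdot - z)$ with $z \in \Z^N$. If $\sup_{y \in \R^N}\int_{B_1(y)}|u_n|^2\,dx \to 0$ held, the fractional Lions lemma for $H^{1/2}(\R^N)$ would force $u_n \to 0$ in $L^r(\R^N)$ for every $r \in (2, 2N/(N-1))$, and together with the Hardy-Littlewood-Sobolev inequality applied to the Choquard integral this would make the nonlinear terms vanish, contradicting $c_\infty > 0$. So, up to a subsequence, there exist $z_n \in \Z^N$ and $\delta > 0$ with $\int_{B_1(0)}|u_n(\cdot - z_n)|^2\,dx \geq \delta$. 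Setting $\wt{u}_n := u_n(\cdot - z_n)$, translation invariance makes $\{\wt{u}_n\}$ a bounded Palais-Smale sequence for $J_0$ at level $c_\infty$, and after a further subsequence $\wt{u}_n \weakto u_0 \neq 0$ in $H^{1/2}(\R^N)$. Passing to the limit in $\langle J_0'(\wt{u}_n), v\rangle = o(1)$ for $v \in C_c^\infty(\R^N)$, using pointwise a.e. convergence, the Hardy-Littlewood-Sobolev inequality, and the growth estimate \eqref{F-eps} to control the non-local Choquard term, yields $J_0'(u_0) = 0$.

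Since $u_0$ is a non-trivial critical point of $J_0$, $J_0(u_0) \geq c$. For the reverse inequality, the identity $\langle J_0'(u_0), u_0\rangle = 0$ allows one to rewrite
\[
J_0(u) = \Bigl(\tfrac{1}{2} - \tfrac{1}{q}\Bigr) \|u\|_{m,V}^2 + \int_{\R^N}\Bigl[\tfrac{1}{q}(I_\alpha * F(\cdot, u)) f(x,u) u - \tfrac{1}{2}(I_\alpha * F(\cdot, u)) F(x, u)\Bigr]\, dx
\]
at critical points, and by \eqref{AR-q} the bracketed integrand is non-negative. Weak lower semicontinuity of the norm, together with Fatou's lemma applied to this integrand (after extracting a.e. convergent subsequences and using a.e. convergence of $I_\alpha * F(\cdot, \wt{u}_n)(x) \to I_\alpha * F(\cdot, u_0)(x)$), gives
\[
J_0(u_0) \leq \liminf_n J_0(\wt{u}_n) = c_\infty.
\]
Combined with $c_\infty = \lim c_n \leq c$ from the first paragraph, all inequalities collapse, so $u_0$ is a ground state of \eqref{eq:1.1} with $\mu = 0$ and $c_n \to c$.

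The main obstacle is the rigorous passage to the limit in the non-local Choquard nonlinearity under only weak convergence in $H^{1/2}(\R^N)$: one must show both that $(I_\alpha * F(\cdot, \wt{u}_n)) f(x, \wt{u}_n) \weakto (I_\alpha * F(\cdot, u_0)) f(x, u_0)$ in a distributional sense strong enough to identify $u_0$ as a critical point of $J_0$, and that the non-negative integrand appearing in the representation of $J_0$ at critical points is lower-semicontinuous along $\wt{u}_n \weakto u_0$. Both follow from standard Hartree-Choquard techniques, but must be carried out here under the general nonlinearity $f$ satisfying only (F1)-(F4) rather than a pure-power growth.
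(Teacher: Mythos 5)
Your approach is essentially the same as the paper's: show boundedness, rule out vanishing, translate, identify the weak limit as a critical point of the $\mu=0$ functional, and conclude by weak lower semicontinuity plus Fatou. One structural difference is that you derive $c_n\to c$ as a by-product of the Fatou step, whereas the paper establishes $c_n\to c_0$ independently (its Lemma~6.3) via a two-sided test-function comparison $t_nu_n\in\cN_0$, $s_nu_0\in\cN_n$; both routes work, and yours is a little shorter. Your reformulation of $\{u_n\}$ as a bona fide Palais--Smale sequence for $J_0$ (using $\|J_0'(u_n)\|_{(H^{1/2})^*}=O(\mu_n)$ from the fractional Hardy inequality and boundedness) is slightly cleaner than the paper's, which passes to $\cE_0'(w_n)(\varphi)\to 0$ directly.

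Two points deserve to be made precise. First, when ruling out vanishing you appeal to ``contradicting $c_\infty>0$'', but nowhere do you establish a uniform positive lower bound $c_n\geq\beta>0$. This requires a version of the paper's Lemma~6.1: since $u_n\in\cN_n$ and $\cE_n(u_n)=\max_{t>0}\cE_n(tu_n)\geq\cE_n(ru_n/\|u_n\|_{\mu_n})$, and the quadratic forms $Q_{\mu_n}$ are \emph{uniformly} (in $n$) equivalent to the standard $H^{1/2}$ norm for $\mu_n\in(0,\mu^*)$, the mountain-pass geometry estimate in Lemma~\ref{lem_assumpt}~(J1) holds with constants independent of $n$, giving $c_n\geq\frac14 r^2>0$. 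Without this the dichotomy argument does not close. Second, the representation
\[
J_0(u)=\Bigl(\tfrac12-\tfrac1q\Bigr)\|u\|^2+\frac1q\int_{\R^N\times\R^N}\frac{F(x,u(x))\bigl[f(y,u(y))u(y)-\tfrac{q}{2}F(y,u(y))\bigr]}{|x-y|^{N-\alpha}}\,dx\,dy
\]
is exact only at critical points of $J_0$, and the $\wt u_n$ are not critical points of $J_0$ but only approximately so. You should instead write $J_0(\wt u_n)=J_0(\wt u_n)-\tfrac1q J_0'(\wt u_n)(\wt u_n)+o(1)$ (using $\|J_0'(\wt u_n)\|=O(\mu_n)$ and boundedness) and then apply Fatou and lower semicontinuity term by term; alternatively, as the paper does, work with the exact identity $\cE_n(u_n)=\cE_n(u_n)-\tfrac1q\cE_n'(u_n)(u_n)$ and observe that the Hardy term $\mu_n\int|u_n|^2/|x|\,dx\to 0$. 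Both repairs are routine, but as written there is a step that silently applies an identity where it does not hold.
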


\section{Variational setting}

Here and in the sequel $|\cdot|_k$ denotes the usual norm in
$L^k (\R^N)$, where $k \in [1,\infty]$.

The energy functional, associated with \eqref{eq:1.1},
$\cE \colon H^{1/2} (\R^N) \rightarrow \R$ is given by
\begin{multline} \label{E}
\cE(u) := \frac12 \int_{\R^N} \sqrt{|\xi|^2 + m^2} |\hat{u}(\xi)|^2 \, d\xi + \frac12 \int_{\R^N} (V(x)-m) |u(x)|^2 \, dx - \frac{\mu}{2} \int_{\R^N} \frac{|u(x)|^2}{|x|} \, dx \\ - \frac{1}{2} \int_{\R^N \times \R^N} \frac{F(x,u(x))F(y,u(y))}{|x-y|^{N-\alpha}} \, dx \, dy + \frac{1}{q} \int_{\R^N} K(x) |u(x)|^q \, dx.
\end{multline}

\begin{Lem}\label{norm}
The quadratic form
\begin{align*}
	u \mapsto Q(u) := \int_{\R^N} \sqrt{|\xi|^2 + m^2} |\hat{u}(\xi)|^2 \, d\xi +  \int_{\R^N} (V(x)-m) |u(x)|^2 \, dx
\end{align*}
is positive-definite and generates a norm on $H^{1/2} (\R^N)$ that is
equivalent to the standard one. In particular there exist two positive
constants $0 < C (N,m) \leq C (N,m,|V|_\infty)$ such that
\begin{gather*}
C (N,m) \left( [u]^2 + |u|_2^2 \right) \leq Q(u) \leq C (N,m,|V|_\infty) \left( [u]^2 + |u|_2^2 \right) ,
\end{gather*}
where 
\begin{gather*}
[u]^2 := \int_{\R^N \times \R^N} \frac{|u(x)-u(y)|^{2}}{|x-y|^{N+1}} \, dx \, dy
\end{gather*}
is the Gagliardo semi-norm in $H^{1/2} (\R^N)$.
\end{Lem}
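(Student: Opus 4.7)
The plan is to split $Q$ into its kinetic and potential parts and compare each to the standard $H^{1/2}(\R^N)$ norm $\|u\|^2 := [u]^2 + |u|_2^2$ by passing to Fourier variables.

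First I would invoke the classical Plancherel-type identity
\begin{equation*}
\int_{\R^N} |\xi|\, |\hat{u}(\xi)|^2 \, d\xi = C_N \, [u]^2 \qquad \text{for all } u \in H^{1/2}(\R^N),
\end{equation*}
where $C_N > 0$ is a dimensional constant; this is the standard Fourier representation of the $s=1/2$ fractional Laplacian seminorm (see e.g.\ Di~Nezza--Palatucci--Valdinoci). Combining it with the elementary pointwise bounds
\begin{equation*}
\tfrac{1}{2}(|\xi| + m) \leq \sqrt{|\xi|^2 + m^2} \leq |\xi| + m, \qquad \xi \in \R^N,
\end{equation*}
yields the two-sided estimate
\begin{equation*}
\tfrac{1}{2}\bigl(C_N [u]^2 + m |u|_2^2\bigr) \;\leq\; \int_{\R^N}\sqrt{|\xi|^2+m^2}\,|\hat{u}(\xi)|^2\,d\xi \;\leq\; C_N [u]^2 + m|u|_2^2.
\end{equation*}

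Next, assumption (V2) gives $V(x) - m \geq 0$ for a.e.\ $x \in \R^N$, which combined with the lower bound above produces
\begin{equation*}
Q(u) \;\geq\; \tfrac{1}{2}\bigl(C_N [u]^2 + m |u|_2^2\bigr) \;\geq\; C(N,m)\bigl([u]^2 + |u|_2^2\bigr)
\end{equation*}
with $C(N,m) := \tfrac{1}{2}\min(C_N, m)$; in particular $Q$ is positive-definite. For the reverse inequality I would use (V1), which gives $V \in L^\infty(\R^N)$ and hence $V(x) - m \leq |V|_\infty$ a.e., so that
\begin{equation*}
Q(u) \;\leq\; C_N [u]^2 + (m + |V|_\infty)|u|_2^2 \;\leq\; C(N,m,|V|_\infty)\bigl([u]^2 + |u|_2^2\bigr),
\end{equation*}
which proves the norm equivalence.

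The argument is essentially bookkeeping; there is no genuine obstacle because everything reduces to a Fourier-side pointwise comparison of the symbols $\sqrt{|\xi|^2+m^2}$ and $|\xi|+m$, combined with the standard identification of $[u]$ with the $H^{1/2}$ Fourier seminorm. The one subtle point is to ensure that the \emph{lower} constant genuinely depends only on $N$ and $m$, which works here because (V2) is invoked only through the sign of $V - m$, while the size of $V$ enters exclusively in the upper constant.
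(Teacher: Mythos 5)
Your proposal is correct and follows the same overall route as the paper: pass to Fourier variables, use the Plancherel-type identity $\int_{\R^N}|\xi|\,|\hat{u}|^2\,d\xi = \tfrac12 C(N,1/2)\,[u]^2$ from Di Nezza--Palatucci--Valdinoci, compare the symbol $\sqrt{|\xi|^2+m^2}$ pointwise to $|\xi|+m$, and bound the potential term by $|V|_\infty$ and (V2). The one genuine difference is in the lower bound. The paper uses only $\sqrt{|\xi|^2+m^2}\geq|\xi|$, so the kinetic part contributes nothing to $|u|_2^2$, and the $L^2$ contribution must come from the potential via $(\essinf V - m)|u|_2^2$; the resulting constant is $\min\{\tfrac12 C(N,1/2),\ \essinf V - m\}$, which depends on $\essinf V$, not just on $N$ and $m$. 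You instead use the sharper pointwise bound $\sqrt{|\xi|^2+m^2}\geq\tfrac12(|\xi|+m)$ (valid, since $3|\xi|^2-2m|\xi|+3m^2>0$ always), extracting $\tfrac{m}{2}|u|_2^2$ from the kinetic term alone and using (V2) only to discard $\int(V-m)u^2\geq 0$. This buys you a lower constant $\tfrac12\min(C_N,m)$ that genuinely depends only on $N$ and $m$, which actually matches the lemma's statement more faithfully than the paper's own proof does, and also shows the weaker hypothesis $V\geq m$ a.e.\ would already suffice for positive-definiteness. Your upper bound uses the slightly coarser estimate $V-m\leq |V|_\infty$ where the paper uses $V-m\leq|V|_\infty - m$; both are valid.
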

\begin{proof}
We note that, by the Plancherel's theorem,
\begin{align*}
  Q(u) &= \int_{\R^N} \sqrt{|\xi|^2 + m^2} |\hat{u}(\xi)|^2 \, d\xi + \int_{\R^N} (V(x)-m) u^2 \, dx \\
       &\leq \int_{\R^N} (|\xi| + m) |\hat{u}(\xi)|^2 \, d\xi + \int_{\R^N} (|V|_\infty-m) u^2 \, dx \\
       &= \int_{\R^N} |\xi| |\hat{u}(\xi)|^2 \, d\xi + \int_{\R^N} |V|_\infty u^2 \, dx \\
       &= \frac12 C\left( N, \frac12 \right) [u]^2 + \int_{\R^N} |V|_\infty u^2 \, dx \\
       &\leq \max \left\{ \frac12 C\left( N, \frac12 \right), |V|_\infty \right\} \left( [u]^2 + |u|_2^2 \right),
\end{align*}
where (see \cite{DNPV})
\begin{gather*}
C\left( N, \frac12 \right) = \left( \int_{\R^N} \frac{1 - \cos \zeta_1}{|\zeta|^{N+1}} \, d\zeta \right)^{-1}.
\end{gather*}
On the other hand we have
\begin{align}
Q(u) &= \int_{\R^N} \sqrt{|\xi|^2 + m^2} |\hat{u}(\xi)|^2 \, d\xi + \int_{\R^N} (V(x)-m) u^2 \, dx \nonumber \\ 
&\geq \int_{\R^N} |\xi| |\hat{u}(\xi)|^2 \, d\xi + \int_{\R^N} \left( \essinf_{\R^N} V - m \right) u^2 \, dx \nonumber \\
&= \frac12 C\left( N, \frac12 \right) [u]^2 + \left( \essinf_{\R^N} V - m \right) |u|_2^2 \nonumber \\
&\geq \min \left\{ \frac12 C\left( N, \frac12 \right), \essinf_{\R^N} V - m \right\} \left( [u]^2 + |u|_2^2 \right).
\label{eq:2.1}
\end{align}
Recall that, see \cite{DNPV},
\begin{gather*}
C\left( N, \frac12 \right)^{-1} = \int_{\R^{N-1}} \frac{1}{(1+|\eta|^2)^{\frac{N+1}{2}}} \, d\eta \cdot \int_{\R} \frac{1 - \cos t}{t^2} \, dt.
\end{gather*}
By contour integration it follows that $\int_{\R} \frac{1 - \cos t}{t^2} \, dt = \pi$, so
\begin{gather*}
C\left( N, \frac12 \right)^{-1} = \pi \int_{\R^{N-1}} \frac{1}{(1+|\eta|^2)^{\frac{N+1}{2}}} \, d\eta.
\end{gather*}
For $N=2$ we have
\begin{gather} \label{eq:C(2,1/2)}
C\left( 2, \frac12 \right)^{-1} = \pi \int_{\R} \frac{1}{(1+|\eta|^2)^{\frac{3}{2}}} \, d\eta =  2 \pi.
\end{gather}

For $N \geq 3$, using polar coordinates we see that
\begin{align*}
C\left( N, \frac12 \right)^{-1} &= \pi \int_{\R^{N-1}} \frac{1}{(1+|\eta|^2)^{\frac{N+1}{2}}} \, d\eta = \pi \omega_{N-2} \int_{0}^{+\infty} \frac{r^{N-2}}{(1+r^2)^{\frac{N+1}{2}}} \, dr \\
&= \pi \omega_{N-2}  \frac{r^{N-1}}{(N-1) (r^2+1)^{\frac{N-1}{2}}} \Bigg|_{r=0}^{r=+\infty} =  \frac{\pi \omega_{N-2}}{N-1}.
\end{align*}
Recalling that $\omega_{N-2} = 2 \pi^{\frac{N-1}{2}} / \Ga \left( \frac{N-1}{2} \right)$ we finally get that
\begin{align} \label{eq:C(N,1/2)}
C\left( N, \frac12 \right)^{-1} = \frac{\pi}{N-1}  \frac{2 \pi^{\frac{N-1}{2}}}{\Ga \left( \frac{N-1}{2} \right)} =  \frac{\pi^{\frac{N+1}{2}}}{ \frac{N-1}{2} \Ga \left( \frac{N-1}{2} \right)} = \frac{\pi^{\frac{N+1}{2}}}{ \Ga \left( \frac{N+1}{2} \right)},
\end{align}
where $\Ga$ is the Euler $\Ga$-function.
\end{proof}

We recall the following variant of the fractional Hardy inequality, adapted to our setting.

\begin{Lem}[{\cite[Theorem 1.1]{FS}}]\label{Hardy}
There exists a constant $C_{N,1/2,1/2} > 0$, depending only on $N \geq 1$, such that for every $u \in H^{1/2} (\R^N)$ there holds
\begin{align*}
	[u]^2 \geq C_{N,1/2,1/2} \int_{\R^N} \frac{|u(x)|^2}{|x|} \, dx,
\end{align*}
where
\[
	C_{N,1/2,1/2}=2\pi^{N/2}\frac{\Ga \left(\frac{N+1}{4} \right)^2 \left|\Ga \left(-\frac{1}{2} \right) \right|}{\Ga \left(\frac{N-1}{4} \right)^2 \Ga \left(\frac{N+1}{2} \right)}
\]
is the sharp constant of the inequality (see \cite{FS}).
\end{Lem}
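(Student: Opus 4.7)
The plan is to prove this as the sharp fractional Hardy inequality for $s=1/2$, following the Frank--Seiringer \emph{ground state representation} strategy. The sharp constant will emerge as the eigenvalue of $(-\Delta)^{1/2}$ against a virtual ground state $\phi(x) := |x|^{-(N-1)/2}$, which by scaling and rotational invariance is the unique (up to scale) candidate satisfying an eigenvalue equation $(-\Delta)^{1/2}\phi = c\,\phi/|x|$ in the appropriate scaling class.

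First I would compute the eigenvalue explicitly. Using $\widehat{|x|^{-\ga}}(\xi) = 2^{N-\ga}\pi^{N/2}\Ga((N-\ga)/2)/\Ga(\ga/2)\,|\xi|^{-(N-\ga)}$ with $\ga=(N-1)/2$ and multiplying the Fourier symbol by $|\xi|$, I would obtain, in the sense of tempered distributions,
\[
(-\Delta)^{1/2}\phi \;=\; \Lambda\,\phi/|x|,\qquad \Lambda \;=\; \frac{2\,\Ga((N+1)/4)^2}{\Ga((N-1)/4)^2}.
\]
The factors $\pi^{N/2}$ and $2^{N-\ga}$ cancel in the ratio of normalizing constants for the two powers $(N\pm 1)/2$, leaving only the two $\Ga$ factors.

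Next comes the \emph{ground state substitution}. For $u \in C_c^\infty(\R^N \setminus \{0\})$ set $v := u/\phi$ and expand
\[
|u(x)-u(y)|^2 \;=\; \phi(x)\phi(y)\,|v(x)-v(y)|^2 \;+\; \bigl(\phi(x)-\phi(y)\bigr)\bigl(\phi(x)v(x)^2-\phi(y)v(y)^2\bigr)
\]
inside the Gagliardo representation $\int|\xi||\hat u(\xi)|^2\,d\xi = \tfrac12 C(N,1/2)[u]^2$ used in the proof of Lemma~\ref{norm}. Symmetrizing the cross term against the kernel $|x-y|^{-(N+1)}$ and collapsing one variable via the distributional eigenvalue identity yields $\Lambda\int_{\R^N}|u|^2/|x|\,dx$, and rearranging gives
\[
\tfrac12 C(N,1/2)[u]^2 \;-\; \Lambda \int_{\R^N}\frac{|u|^2}{|x|}\,dx \;=\; \tfrac12 C(N,1/2)\int_{\R^N\times\R^N}\frac{\phi(x)\phi(y)}{|x-y|^{N+1}}|v(x)-v(y)|^2\,dx\,dy \;\ge\; 0.
\]
Inserting $C(N,1/2)^{-1} = \pi^{(N+1)/2}/\Ga((N+1)/2)$ from Lemma~\ref{norm} and $|\Ga(-1/2)|=2\sqrt{\pi}$, the resulting Hardy constant $2\Lambda/C(N,1/2)$ equals precisely $C_{N,1/2,1/2}$. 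A density argument, based on the fact that $\{0\}$ has zero $H^{1/2}$-capacity for $N \ge 2$, then extends the inequality from $C_c^\infty(\R^N\setminus\{0\})$ to all of $H^{1/2}(\R^N)$.

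The main technical obstacle I expect is the rigorous justification of the ground state substitution, because $\phi \notin L^2(\R^N)$ so the eigenvalue identity is only distributional: the cross-term cancellation must be validated through a cutoff approximation $\phi_\eps \to \phi$ together with a dominated-convergence argument against the singular kernel $|x-y|^{-(N+1)}$. The borderline integrability of $\phi(x)\phi(y)/|x-y|^{N+1}$ near the diagonal is precisely what makes the constant $C_{N,1/2,1/2}$ sharp. Sharpness itself, asserted in the statement but not used elsewhere in the paper, follows a posteriori by testing the representation formula with $u_\eps = \chi_\eps\phi$ for a standard truncation $\chi_\eps$ and letting $\eps \to 0$.
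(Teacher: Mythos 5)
The paper does not actually prove Lemma~\ref{Hardy}; it is cited verbatim from Frank--Seiringer \cite[Theorem 1.1]{FS} and used as a black box. Your sketch correctly reconstructs the Frank--Seiringer ground-state-representation argument, and all the constant bookkeeping checks out: the distributional eigenvalue $(-\Delta)^{1/2}|x|^{-(N-1)/2}=\Lambda\,|x|^{-(N+1)/2}$ with $\Lambda = 2\,\Gamma(\tfrac{N+1}{4})^2/\Gamma(\tfrac{N-1}{4})^2$ follows from the stated Fourier formula for $|x|^{-\gamma}$, the pointwise algebraic identity for $|u(x)-u(y)|^2$ in terms of $v=u/\phi$ is exact, and $2\Lambda/C(N,\tfrac12) = 4\pi^{(N+1)/2}\Gamma(\tfrac{N+1}{4})^2/\bigl(\Gamma(\tfrac{N-1}{4})^2\Gamma(\tfrac{N+1}{2})\bigr)$ coincides with the paper's $C_{N,1/2,1/2}$ after inserting $|\Gamma(-\tfrac12)|=2\sqrt{\pi}$. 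You also correctly flag the real technical obstacle: since $\phi\notin L^2(\R^N)$ the eigenvalue identity is only distributional, so the collapse of the cross term must go through a cutoff approximation --- this is precisely what \cite{FS} handles carefully (they work with general $p$, $s$ and prove the ground-state representation via regularised weights). The only minor imprecisions are cosmetic: the representation formula requires the convention $(-\Delta)^{1/2}u = C(N,\tfrac12)\,\mathrm{P.V.}\!\int (u(x)-u(y))|x-y|^{-(N+1)}dy$ to be stated explicitly so that the factor $\tfrac12 C(N,\tfrac12)$ in front of $[u]^2$ matches the normalisation of the singular integral; and the claim for $N=1$ is vacuous since $\Gamma(\tfrac{N-1}{4})=\Gamma(0)=\infty$ makes $C_{1,1/2,1/2}=0$, consistent with the lemma's ``$N\ge 1$'' phrasing but rendering your capacity argument ($N\ge2$) the only substantive case. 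In short, your proposal is a faithful and essentially correct reconstruction of the cited result rather than an alternative to anything proved in the paper.
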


\begin{Lem}\label{norm-equiv}
There is $\mu^* > 0$ such that for any $0 < \mu < \mu^*$ the quadratic form
\begin{align*}
Q_\mu \colon u \mapsto Q(u)  - \mu \int_{\R^N} \frac{|u(x)|^2}{|x|} \, dx
\end{align*}
is positive-definite and generates a norm on $H^{1/2} (\R^N)$ that is equivalent to the standard one. The constant $\mu^* = \mu^* (N) = 2 \frac{\Gamma \left( \frac{N+1}{4} \right)^2}{\Gamma \left( \frac{N-1}{4} \right)^2}$ depends only on the dimension $N$.
\end{Lem}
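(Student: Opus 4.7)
The plan is to deduce the lemma by combining the explicit bounds on $Q(u)$ from \Cref{norm} with the fractional Hardy inequality \Cref{Hardy}, after translating the Gagliardo seminorm $[u]^2$ into the Fourier expression $\int_{\R^N}|\xi||\hat u(\xi)|^2\,d\xi$. Since the upper bound
\[
Q_\mu(u)\leq Q(u)\leq C(N,m,|V|_\infty)\bigl([u]^2+|u|_2^2\bigr)
\]
is immediate from $\mu>0$ and \Cref{norm}, the content lies entirely in the coercivity.

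First I would express the Hardy inequality in terms of the kinetic symbol. The identity used in the proof of \Cref{norm} is
\[
\int_{\R^N}|\xi||\hat u(\xi)|^2\,d\xi=\tfrac12\,C\!\left(N,\tfrac12\right)[u]^2,
\]
so combining it with $[u]^2\geq C_{N,1/2,1/2}\int_{\R^N}|u|^2/|x|\,dx$ yields
\[
\int_{\R^N}|\xi||\hat u(\xi)|^2\,d\xi\;\geq\;\tfrac12\,C\!\left(N,\tfrac12\right)C_{N,1/2,1/2}\int_{\R^N}\frac{|u(x)|^2}{|x|}\,dx.
\]
The next step is an explicit computation of the prefactor. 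Using the formula $C(N,\tfrac12)=\Gamma\bigl(\tfrac{N+1}{2}\bigr)/\pi^{(N+1)/2}$ from \eqref{eq:C(N,1/2)} and $|\Gamma(-\tfrac12)|=2\sqrt\pi$ in the expression for $C_{N,1/2,1/2}$, the $\pi$'s and the factor $\Gamma\bigl(\tfrac{N+1}{2}\bigr)$ cancel and one obtains exactly
\[
\tfrac12\,C\!\left(N,\tfrac12\right)C_{N,1/2,1/2}\;=\;2\,\frac{\Gamma\!\bigl(\tfrac{N+1}{4}\bigr)^2}{\Gamma\!\bigl(\tfrac{N-1}{4}\bigr)^2}\;=\;\mu^*(N).
\]
This identifies $\mu^*$ as the sharp Hardy constant for the operator $\sqrt{-\Delta}$ in the kinetic form, which is the whole point of the statement.

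For the lower bound on $Q_\mu$, fix $\mu\in(0,\mu^*)$ and set $\theta:=\mu/\mu^*\in(0,1)$. Splitting the kinetic term and applying the identity above to the fraction $\theta$ of it gives
\[
\int_{\R^N}|\xi||\hat u(\xi)|^2\,d\xi-\mu\int_{\R^N}\frac{|u(x)|^2}{|x|}\,dx\;\geq\;(1-\theta)\int_{\R^N}|\xi||\hat u(\xi)|^2\,d\xi\;=\;\frac{1-\theta}{2}\,C\!\left(N,\tfrac12\right)[u]^2.
\]
Combining this with the bound $\sqrt{|\xi|^2+m^2}\geq|\xi|$ and assumption (V2), which forces $(V(x)-m)\geq\essinf V - m>0$, I obtain
\[
Q_\mu(u)\;\geq\;\frac{1-\theta}{2}\,C\!\left(N,\tfrac12\right)[u]^2+\bigl(\essinf_{\R^N} V-m\bigr)|u|_2^2\;\geq\;c\bigl([u]^2+|u|_2^2\bigr)
\]
with $c=\min\bigl\{\tfrac{1-\theta}{2}C(N,\tfrac12),\,\essinf V-m\bigr\}>0$.

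I do not foresee a real obstacle here: the only subtle point is the bookkeeping that matches the Frank--Seiringer constant $C_{N,1/2,1/2}$ (which is tied to the Gagliardo seminorm) with the kinetic form constant through $C(N,\tfrac12)$ to recover the clean formula $\mu^*(N)=2\,\Gamma(\tfrac{N+1}{4})^2/\Gamma(\tfrac{N-1}{4})^2$. Once this normalization is pinned down, everything else is a convex combination argument and the equivalence of norms follows immediately from \Cref{norm}.
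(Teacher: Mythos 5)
Your argument is correct and follows essentially the same route as the paper: bound $\sqrt{|\xi|^2+m^2}\geq|\xi|$, convert to the Gagliardo seminorm via $\int_{\R^N}|\xi||\hat u|^2\,d\xi=\frac12 C(N,\tfrac12)[u]^2$, invoke the Frank--Seiringer Hardy inequality, use (V2) for the $L^2$ part, and compute $\frac12 C(N,\tfrac12)C_{N,1/2,1/2}=2\,\Gamma(\tfrac{N+1}{4})^2/\Gamma(\tfrac{N-1}{4})^2$. Your convex-combination parameter $\theta=\mu/\mu^*$ is only a cosmetic repackaging of the paper's constant $\frac12 C(N,\tfrac12)-\mu/C_{N,1/2,1/2}$, so there is no substantive difference.
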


\begin{proof}
Note that $Q_\mu (u) \leq Q(u)$ for any $u \in H^{1/2}(\R^N)$. On the other hand, recalling \eqref{eq:2.1} and Lemma~\ref{Hardy},
\begin{align*}
Q_\mu (u) &= Q(u) - \mu \int_{\R^N} \frac{|u(x)|^2}{|x|}\, dx \\
&\geq \int_{\R^N} |\xi| |\hat{u}(\xi)|^2 \, d\xi + \left( \essinf_{\R^N} V -m \right) |u|_2^2 - \mu \int_{\R^N} \frac{|u(x)|^2}{|x|}\, dx \\
&= \frac{1}{2} C \left( N,\frac{1}{2} \right) [u]^2 + \left( \essinf_{\R^N} V -m \right) |u|_2^2 - \mu \int_{\R^N} \frac{|u(x)|^2}{|x|}\, dx \\
&\geq \frac{1}{2} C \left( N,\frac{1}{2} \right) [u]^2 + \left( \essinf_{\R^N} V -m \right) |u|_2^2 - \frac{\mu}{C_{N,1/2,1/2}} [u]^2 \\
&= \left( \frac{1}{2} C\left( N,\frac{1}{2} \right) -\frac{\mu}{C_{N,1/2,1/2}} \right) [u]^2 + \left( \essinf_{\R^N} V-m \right) |u|_2^2 \\
&\geq \min \left\{ \frac{1}{2} C\left( N,\frac{1}{2}\right) - \frac{\mu}{C_{N,1/2,1/2}}, \essinf_{\R^N} V-m \right\} \left( [u]^2+|u|_2^2 \right).
\end{align*}
Recalling \eqref{eq:C(2,1/2)} and \eqref{eq:C(N,1/2)}, the conclusion for $N \geq 3$ follows provided that
\begin{align*}
\mu &< \frac{1}{2} C_{N,1/2,1/2} \cdot C\left( N,\frac{1}{2} \right) \\
&= \frac{1}{2} \cdot 2 \pi^{N/2} \frac{\Gamma \left(\frac{N+1}{4} \right)^2}{\Gamma \left( \frac{N-1}{4} \right)^2} \frac{\left| \Gamma \left( -\frac{1}{2} \right) \right|}{\Gamma \left( \frac{N+1}{2} \right)} \frac{N-1}{\pi} \frac{\Gamma \left( \frac{N-1}{2} \right)}{  2 \pi^{ \frac{N-1}{2} }  } \\
&= (N-1) \frac{\Gamma \left( \frac{N+1}{4} \right)^2}{\Gamma \left( \frac{N-1}{4} \right)^2} \frac{\Gamma \left( \frac{N-1}{2} \right)}{\Gamma \left( \frac{N+1}{2} \right)} = 2 \frac{\Gamma \left( \frac{N+1}{4} \right)^2}{\Gamma \left( \frac{N-1}{4} \right)^2},
\end{align*}
where we have used the fact that
\begin{align*}
\Gamma \left( -\frac{1}{2} \right) &= - 2 \sqrt{\pi}, \\
\frac{N-1}{2} \Gamma \left( \frac{N-1}{2} \right) &= \Gamma \left( \frac{N+1}{2} \right).
\end{align*}
When $N=2$, we conclude under the constraint
\begin{align*}
\mu &< \frac{1}{2} C\left( 2,\frac{1}{2} \right) C_{2,1/2,1/2} 
= \frac{1}{2} \frac{\Gamma \left( \frac{3}{4} \right)^2}{\Gamma \left( \frac{1}{4} \right)^2} \frac{2 \sqrt{\pi}}{\Gamma \left(\frac{3}{2} \right)} = 2 \frac{\Gamma \left( \frac{3}{4} \right)^2}{\Gamma \left( \frac{1}{4} \right)^2}.
\end{align*}

\end{proof}

Thanks to Lemma \ref{norm-equiv} we can introduce the norm $\| u \|_\mu := \sqrt{Q_\mu(u)}$ on $H^{1/2} (\R^N)$ for $0 < \mu < \mu^*$. In the rest of the paper we will use $\langle \cdot, \cdot \rangle$ for the scalar product corresponding to $Q$ and $\langle \cdot, \cdot \rangle_\mu$ for the scalar product which corresponds to $Q_\mu$. Moreover we define
\begin{align*}
	\cD(u) := \int_{\R^N \times \R^N} \frac{F(x,u(x))F(y,u(y))}{|x-y|^{N-\alpha}} \, dx \, dy,
\end{align*}
that is well-defined on $H^{1/2} (\R^N)$ by (F1) and (N). Thus we can rewrite our functional in the form
\begin{align*}
	\cE(u) = \frac12 \|u\|^2_\mu - \frac12 \cD(u) + \frac{1}{q} \int_{\R^N} K(x) |u(x)|^q \, dx.
\end{align*}
It is standard to check that $\cE$ is of $\cC^1$-class and its critical points are weak solutions to \eqref{eq:1.1}.

\section{Existence and boundedness of a Cerami sequence}

Suppose that $(E, \| \cdot \|)$ is a Hilbert space and $\cE \colon E \rightarrow \R$ is a nonlinear functional of the general form
\begin{align*}
	\cE(u) = \frac12 \|u\|^2 - \cI(u),
\end{align*}
where $\cI$ is of $\cC^1$ class and $\cI(0)=0$. We introduce the following set
\begin{align*}
	\cN := \{ u \in E \setminus \{ 0 \} \mid  \cE'(u)(u) = 0 \},
\end{align*}
which is known as the \textit{Nehari manifold}. It is obvious that any nontrivial critical point of $\cE$ belongs to $\cN$.
The following theorem follows from \cite{BM, MSS}, see also the abstract setting in \cite{B-prep}.

\begin{Th}[{\cite[Theorem 5.1]{B-prep}}]
Suppose that
\begin{itemize}
\item[(J1)] there is $r > 0$ such that
\begin{align*}
	\inf_{\|u\|=r} \cE(u) > 0;
\end{align*}
\item[(J2)] $\frac{\cI (t_n u_n)}{t_n^2} \to +\infty$ for $t_n \to +\infty$ and $u_n \to u \neq 0$;
\item[(J3)] for all $t > 0$ and $u \in \cN$ there holds
\begin{align*}
	\frac{t^2-1}{2} \cI'(u)(u) - \cI(tu) + \cI(u) \leq 0.
\end{align*}
\end{itemize}
Then $\cN \neq \emptyset$, $\Ga \neq \emptyset$ and
\begin{align*}
	c := \inf_{\cN} \cE = \inf_{\gamma \in \Gamma} \sup_{t \in [0,1]} \cE(\gamma(t)) = \inf_{u \in E \setminus \{0\}} \sup_{t \geq 0} \cE(tu) > 0,
\end{align*}
where
\begin{align*}
	\Gamma := \left\{ \gamma \in \cC ([0,1], E) \mid \gamma(0) = 0, \ \|\gamma(1)\| > r, \ \cE(\gamma(1)) < 0 \right\}.
\end{align*}
Moreover there is a Cerami sequence for $\cE$ at level~$c$, i.e. a sequence $\{ u_n \}_n \subset E$ such that
\begin{align*}
	\cE(u_n) \to c, \quad (1+\|u_n\|) \cE'(u_n) \to 0.
\end{align*}
\end{Th}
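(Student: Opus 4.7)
Following the abstract framework of \cite{BM, MSS, B-prep}, the plan is to: (a) construct elements of $\cN$ and of $\Gamma$ via the fibering map $\varphi_u(t) := \cE(tu)$, (b) identify the three minimax levels, and (c) produce a Cerami sequence by a quantitative deformation argument.

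For step (a), fix $u \in E \setminus \{0\}$ and observe that $\varphi_u(t) = \frac{t^2}{2}\|u\|^2 - \cI(tu)$ satisfies $\varphi_u(0) = 0$, $\varphi_u(r/\|u\|) \geq \inf_{\|v\|=r}\cE(v) > 0$ by (J1), and $\varphi_u(t)/t^2 \to -\infty$ as $t \to +\infty$ by (J2) applied to the constant sequence $u_n \equiv u$. Hence $\varphi_u$ attains a strictly positive maximum at some $t_u > 0$; the identity $\cE'(t_u u)(t_u u) = t_u \varphi_u'(t_u) = 0$ gives $t_u u \in \cN$. Choosing $T > 0$ so large that $\cE(Tu) < 0$ and $\|Tu\| > r$, the segment $\gamma(s) := sTu$ belongs to $\Gamma$, so both $\cN$ and $\Gamma$ are nonempty.

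For step (b), each $v \in \cN$ satisfies $\|v\|^2 = \cI'(v)(v)$, so (J3) applied to $v$ with arbitrary $t \geq 0$ yields
\begin{equation*}
\cE(tv) - \cE(v) = \frac{t^2-1}{2}\cI'(v)(v) - \cI(tv) + \cI(v) \leq 0,
\end{equation*}
hence $\sup_{t \geq 0}\cE(tv) = \cE(v)$. Consequently $\inf_\cN \cE = \inf_{u \neq 0}\sup_{t \geq 0}\cE(tu)$, and both infima dominate $\inf_{\|v\|=r}\cE(v) > 0$ because every ray from $0$ must cross the sphere of radius $r$. The bound $\inf_\Gamma \sup_s \cE(\gamma(s)) \leq \inf_\cN \cE$ follows from the segment built in step (a) composed with the scaling $t_u$. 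For the reverse, I would show that every $\gamma \in \Gamma$ meets $\cN$: using that $u \mapsto t_u$ is continuous on $E \setminus \{0\}$ (uniqueness of the maximizer follows from the strict form of (J3) implicitly available when $u \in \cN$ and $t \neq 1$), that $\|v\|$ is bounded away from $0$ on $\cN$ (since $\cE \geq c > 0$ there while $\cE$ is continuous with $\cE(0) = 0$), and that $\cE(\gamma(1)) < 0$ together with $\|\gamma(1)\| > r$ forces $t_{\gamma(1)} < 1$ while $t_{\gamma(s)} \to +\infty$ as $s \to 0^+$, an intermediate-value argument produces $s^* \in (0,1)$ with $\gamma(s^*) \in \cN$, so $\sup_s \cE(\gamma(s)) \geq c$.

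For step (c), I would argue by contradiction: if no Cerami sequence existed at level $c$, then for some $\eps, \delta > 0$ one would have $(1+\|u\|)\|\cE'(u)\| \geq \delta$ throughout $\{u : \cE(u) \in [c-\eps, c+\eps]\}$. A standard construction then yields a locally Lipschitz pseudo-gradient field $W$ with $\|W(u)\| \leq 1/(1+\|u\|)$ and $\langle \cE'(u), W(u) \rangle \geq \delta/2$ on that strip, whose flow deforms any near-minimal $\gamma \in \Gamma$ into a new element of $\Gamma$ with $\sup_s \cE \leq c - \eps/2$, contradicting the minimax definition of $c$. The main obstacle is the intersection property in step (b): the continuity and asymptotic behaviour of the map $u \mapsto t_u$ must be pinned down carefully, and this is where the abstract hypotheses (J1)--(J3) are felt most directly.
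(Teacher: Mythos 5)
Your steps (a) and (c) follow the standard route and are essentially sound, as is the easy half of step (b): the chain $\inf_{\Gamma}\sup\cE\le\inf_{\cN}\cE=\inf_{u\ne0}\sup_{t\ge0}\cE(tu)$, and the positivity $c\ge\inf_{\|u\|=r}\cE(u)>0$, are correctly argued. The genuine gap is exactly where you flag it, namely the reverse inequality $\inf_{\Gamma}\sup\cE\ge\inf_{\cN}\cE$ via the intersection property. Your argument rests on (i) the map $u\mapsto t_u$ being single-valued and continuous, and (ii) the claim $t_{\gamma(1)}<1$. Neither follows from (J1)--(J3) as stated. Condition (J3) is a non-strict inequality; applying it once to $v\in\cN$ with $t=\tau$ and once to $\tau v\in\cN$ with $t=1/\tau$ merely yields $\cE(v)=\cE(\tau v)$, which is no contradiction, so $\varphi_u(t):=\cE(tu)$ may have several global maximizers and $t_u$ need not be well-defined. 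The parenthetical remark that ``uniqueness of the maximizer follows from the strict form of (J3) implicitly available'' is wishful: nothing in (J1)--(J3) upgrades $\le$ to $<$, and indeed the paper's verification in Lemma \ref{lem_assumpt} only establishes non-strict monotonicity via (F4). Without uniqueness the continuity of $u\mapsto t_u$ cannot even be formulated, so the intermediate-value argument collapses. Furthermore, even if one had uniqueness, the inference $t_{\gamma(1)}<1$ from $\cE(\gamma(1))<0$ and $\|\gamma(1)\|>r$ implicitly assumes $\varphi_{\gamma(1)}$ is quasi-concave (decreasing past its maximum), which again is a uniqueness-type property you have not established.

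Two smaller points. In step (b), the bound $\sup_{t\ge0}\cE(tv)=\cE(v)$ for $v\in\cN$ uses (J3) only for $t>0$; you should also check that $\cE(v)\ge\cE(0)=0$, which you can do by noting that the ray $\{tv:t>0\}$ meets the sphere $\|u\|=r$ at $t=r/\|v\|>0$ and invoking (J1) and the already-established inequality $\cE(tv)\le\cE(v)$. In step (c), the deformation must stay inside $\Gamma$: you should make explicit that $0$ is fixed by the flow because $\cE(0)=0<c-\eps$, and that $\|\eta(\gamma(1))\|>r$ is preserved because the flow decreases $\cE$ strictly below $\inf_{\|u\|=r}\cE(u)$ along the trajectory starting from $\gamma(1)$, so the trajectory cannot cross the sphere of radius $r$. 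These are fixable, but the missing uniqueness/continuity of $t_u$ in step (b) is a substantive hole; the abstract theorem in the cited reference must supply additional structure (a strict monotonicity of the fibering map, or a different topological argument avoiding the map $u\mapsto t_u$) to close it, and your proposal as written does not.
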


Observe that assumptions (J1)--(J3) do not require that $\cI(u) \geq 0$ for $u \in E$. In fact, $\cI$ may change its sign, which is possible in our situation. Moreover the theorem gives equivalent min-max-type characterizations of the level $c$ (which, in our situation, will be a critical value). 
\begin{Lem}\label{D_estimate}
There is $C > 0$ such that
\begin{align*}
	\cD(u) \leq C \left( \|u\|_\mu^4 + \|u\|_\mu^{p+2} + \|u\|_\mu^{2p} \right)
\end{align*}
for all $u \in H^{1/2}(\R^N)$.
\end{Lem}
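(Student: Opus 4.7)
The plan is to apply the Hardy--Littlewood--Sobolev (HLS) inequality after a pointwise bound on $F$, and then invoke Sobolev embeddings together with the norm equivalence from Lemma~\ref{norm-equiv}. First I would use the consequence \eqref{F-eps} of (F1)--(F3) (say with $\varepsilon=1$) to obtain
\[
F(x,u)\leq C\bigl(|u|^{2}+|u|^{p}\bigr),
\]
so that
\[
F(x,u(x))F(y,u(y))\leq C^{2}\sum_{(a,b)} |u(x)|^{a}|u(y)|^{b},\qquad (a,b)\in\{(2,2),(2,p),(p,2),(p,p)\}.
\]

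For each of the four resulting integrals I would apply HLS with the symmetric choice of conjugate exponents $r=s=\tfrac{2N}{N+\alpha}$, which satisfies the required identity $\tfrac{1}{r}+\tfrac{1}{s}+\tfrac{N-\alpha}{N}=2$. This yields
\[
\int_{\R^{N}\times\R^{N}}\frac{|u(x)|^{a}|u(y)|^{b}}{|x-y|^{N-\alpha}}\,dx\,dy\leq C\,\||u|^{a}\|_{2N/(N+\alpha)}\,\||u|^{b}\|_{2N/(N+\alpha)}=C\,\|u\|_{2aN/(N+\alpha)}^{a}\,\|u\|_{2bN/(N+\alpha)}^{b}.
\]
Specializing to the four cases gives contributions bounded by $\|u\|_{4N/(N+\alpha)}^{4}$, $\|u\|_{4N/(N+\alpha)}^{2}\|u\|_{2Np/(N+\alpha)}^{p}$ and $\|u\|_{2Np/(N+\alpha)}^{2p}$; after applying $2ab\leq a^{2}+b^{2}$ to the mixed term (or simply using AM--GM) these are absorbed into the three desired terms $\|u\|_{\mu}^{4}$, $\|u\|_{\mu}^{p+2}$, $\|u\|_{\mu}^{2p}$.

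The step that needs actual checking is that the Lebesgue exponents $\tfrac{4N}{N+\alpha}$ and $\tfrac{2Np}{N+\alpha}$ both lie in the admissible Sobolev range $[2,\tfrac{2N}{N-1}]$ for $H^{1/2}(\R^{N})$. For the first: $\tfrac{4N}{N+\alpha}\geq 2$ follows from $\alpha<N$, while $\tfrac{4N}{N+\alpha}\leq\tfrac{2N}{N-1}$ is equivalent to $\alpha\geq N-2$, which is implied by condition~(N) since $(N-1)p-N>2(N-1)-N=N-2$. For the second: $\tfrac{2Np}{N+\alpha}\geq 2$ reduces to $(p-1)N\geq\alpha$, trivially true as $p>2$ and $\alpha<N$; and $\tfrac{2Np}{N+\alpha}\leq\tfrac{2N}{N-1}$ is exactly the bound $(N-1)p-N\leq\alpha$ from~(N). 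This is the only subtle point; once verified, the Sobolev embedding $H^{1/2}(\R^{N})\hookrightarrow L^{r}(\R^{N})$ for $r$ in that range, combined with Lemma~\ref{norm-equiv} (i.e.\ $\|u\|_{H^{1/2}}\lesssim\|u\|_{\mu}$), replaces every Lebesgue norm by $\|u\|_{\mu}$ to the correct power, and the proof concludes by choosing a single constant $C$ absorbing all multiplicative factors.
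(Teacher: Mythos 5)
Your proposal is correct and follows essentially the same route as the paper: bound $F$ via \eqref{F-eps}, apply Hardy--Littlewood--Sobolev with the symmetric exponent $r=\tfrac{2N}{N+\alpha}$, check that the resulting Lebesgue exponents lie in the Sobolev range for $H^{1/2}(\R^N)$ (the paper expands via Minkowski after HLS rather than expanding the product of sums before HLS, a cosmetic difference), and close with Sobolev embedding and the equivalence of $\|\cdot\|_\mu$ with the $H^{1/2}$ norm. Your explicit verification that $\tfrac{4N}{N+\alpha}$ and $\tfrac{2Np}{N+\alpha}$ lie in $[2,\tfrac{2N}{N-1}]$ is a useful detail the paper only states implicitly; the mention of AM--GM on the mixed term is harmless but unnecessary, since that term already gives exactly $\|u\|_\mu^{p+2}$.
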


\begin{proof}
In the proof, $C$ denotes a generic, positive constant which may vary from line to line. Fix any $\varepsilon > 0$. Using \eqref{F-eps} we have 
\begin{align*}
	\cD(u) &\leq \int_{\R^N \times \R^N} \frac{|F(x,u(x))||F(y,u(y))|}{|x-y|^{N-\alpha}} \, dx \, dy \\
	&\leq  \int_{\R^N \times \R^N} \frac{\left(\eps|u(x)|^2 + C_{\eps}|u(x)|^p\right)\left(\eps|u(y)|^2 + C_{\eps}|u(y)|^p\right)}{|x-y|^{N-\alpha}} \, dx \, dy.
\end{align*}
Applying the Hardy-Littlewood-Sobolev inequality we obtain
\[
	\cD(u) \leq C \left| \eps|u|^2 + C_{\eps}|u|^p \right|_r \left|\eps|u|^2 + C_{\eps}|u|^p \right|_r = C \left| \eps|u|^2 + C_{\eps}|u|^p \right|_r^2,
\]
where $r = \dfrac{2N}{N+\al}$.
By Minkowski inequality we obtain
\begin{align*}
	\cD(u) &\leq  C \left(\eps \left|u^2\right|_r + C_{\eps} \left|u^p\right|_r\right)^2 = C \left(\eps\left|u\right|^2_{2r} + C_{\eps}\left|u\right|^p_{pr}\right)^2 \\
	&= C \left( \eps^2 \left|u\right|^4_{2r} + 2 \eps C_{\eps}\left|u\right|^2_{2r}\left|u\right|^p_{pr} + C_{\eps}^2 \left|u\right|^{2p}_{pr}\right).
\end{align*}
From (N) there follows that $pr < \frac{2N}{N-1}$, and by Sobolev embeddings
\begin{align*}
	\cD(u) \leq C \left( \eps^2 \|u\|^4_{\mu} + 2 \eps C_{\eps}\|u\|^{p+2}_\mu + C_{\eps}^2\|u\|^{2p}_{\mu}\right),
\end{align*}
and therefore
\begin{align*}
	\cD(u) \leq C \left(\|u\|^4_{\mu} + \|u\|^{p+2}_\mu + \|u\|^{2p}_{\mu}\right).
\end{align*}
\end{proof}
Put
\begin{align*}
	\cI(u) :=   \frac12 \cD(u) - \frac{1}{q} \int_{\R^N} K(x) |u(x)|^q \, dx.
\end{align*}
\begin{Lem}\label{lem_assumpt}
Suppose (N), (F1)--(F4) and ($K$) are satisfied. Then (J1)--(J3) are satisfied on $E = H^{1/2}(\R^N)$ with the norm $\| \cdot \|_\mu$.
\end{Lem}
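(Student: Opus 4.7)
Lemma~\ref{D_estimate} gives $\cD(u)\le C(\|u\|_\mu^4+\|u\|_\mu^{p+2}+\|u\|_\mu^{2p})$, while the penalty $\tfrac{1}{q}\int_{\R^N}K|u|^q\,dx$ enters $\cE$ with a positive sign because $K\ge 0$. Hence for $\|u\|_\mu=r$,
\[
\cE(u)\;\ge\; \tfrac12 r^2 - \tfrac C2\bigl(r^4+r^{p+2}+r^{2p}\bigr),
\]
which is strictly positive once $r>0$ is small enough.

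\textbf{Verification of (J2).}
Passing to a subsequence we may assume $u_n\to u$ a.e. Writing
\[
\frac{\cD(t_n u_n)}{t_n^q}=\int_{\R^{2N}}\frac{F(x,t_nu_n(x))}{|t_nu_n(x)|^{q/2}}\cdot\frac{F(y,t_nu_n(y))}{|t_nu_n(y)|^{q/2}}\cdot\frac{|u_n(x)|^{q/2}|u_n(y)|^{q/2}}{|x-y|^{N-\alpha}}\,dx\,dy,
\]
assumption (F3) forces both ratios to diverge on the positive-measure set $\{u\ne 0\}\times\{u\ne 0\}$, so Fatou's lemma yields $\cD(t_n u_n)/t_n^q\to+\infty$. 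Since $q>2$, the identity
\[
\tfrac{\cI(t_n u_n)}{t_n^2}=t_n^{q-2}\!\left[\tfrac12\tfrac{\cD(t_nu_n)}{t_n^q}-\tfrac1q\int_{\R^N} K|u_n|^q\,dx\right]
\]
implies (J2).

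\textbf{Verification of (J3) --- the main step.}
A direct expansion of the definitions produces
\[
\tfrac{t^2-1}{2}\cI'(u)(u)-\cI(tu)+\cI(u)=-\tfrac12\Delta_D(t,u)+\kappa(t)\!\int_{\R^N} K|u|^q\,dx,
\]
where $\Delta_D(t,u):=\cD(tu)-\cD(u)-\tfrac{t^2-1}{2}\cD'(u)(u)$ and $\kappa(t):=\tfrac{t^q-1}{q}-\tfrac{t^2-1}{2}$; an elementary calculus argument using $q>2$ shows that $\kappa(t)\ge 0$ for every $t\ge 0$. This is precisely the main obstacle: the $K$-term has the ``wrong'' sign and must be absorbed by $\Delta_D$ together with the Nehari identity. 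The key reduction is the $t^q$-refined Choquard inequality
\[
\cD(tu)-\cD(u)\;\ge\;\tfrac{t^q-1}{q}\,\cD'(u)(u),\qquad t\ge 0.
\]

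To prove it, I would deduce from (F4), for $u>0$ and $t\ge 1$, the bound $f(x,tu)\ge t^{(q-2)/2}f(x,u)$; integrating in $s\in[u,tu]$ and using~\eqref{AR-q} yields $F(x,tu)\ge t^{q/2}F(x,u)$, with both inequalities reversed on $t\le 1$ (and a symmetric statement on the negative half-line). Multiplying these pointwise bounds at $x$ and $y$ and integrating against the Riesz kernel produces $\cD'(tu)(tu)\ge t^q\cD'(u)(u)$ for $t\ge 1$ and the reverse for $t\le 1$. Equivalently, $\xi(t):=\cD(tu)-\tfrac{t^q}{q}\cD'(u)(u)$ has derivative $\xi'(t)=\cD'(tu)(u)-t^{q-1}\cD'(u)(u)$ which is nonpositive on $(0,1]$ and nonnegative on $[1,\infty)$, so $\xi$ attains its global minimum at $t=1$; this is the displayed inequality. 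Finally, on~$\cN$ one has $\int K|u|^q\,dx=\tfrac12\cD'(u)(u)-\|u\|_\mu^2\le \tfrac12\cD'(u)(u)$, so combining with $\kappa\ge 0$,
\[
-\tfrac12\Delta_D(t,u)+\kappa(t)\!\int_{\R^N} K|u|^q\,dx\;\le\; -\tfrac12\!\left[\cD(tu)-\cD(u)-\tfrac{t^q-1}{q}\cD'(u)(u)\right]\;\le\; 0,
\]
which proves (J3).
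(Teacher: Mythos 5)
Your verifications of (J1) and (J2) track the paper's closely; for (J2) your identity $\cI(t_nu_n)/t_n^2 = t_n^{q-2}\bigl[\tfrac12\cD(t_nu_n)/t_n^q - \tfrac1q\int K|u_n|^q\,dx\bigr]$ is actually a little cleaner than the paper's bound $\cI(t_nu_n)/t_n^2 \ge \cI(t_nu_n)/t_n^q$, which tacitly needs $\cI(t_nu_n)\ge 0$. For (J3) you take a genuinely different route, and it is correct. The paper sets $\vp(t)=\tfrac{t^2-1}{2}\cI'(u)(u)-\cI(tu)+\cI(u)$, differentiates, folds the $K$-term into $\vp'(t)$ using the Nehari identity $\tfrac12\cD'(u)(u)>\int K|u|^q\,dx$, and reduces to the monotonicity of $\psi_{(x,y)}(t)=F(x,tu(x))f(y,tu(y))u(y)/t^{q-1}$. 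You instead first prove a decoupled structural inequality valid for all $u\in H^{1/2}(\R^N)$ and all $t\ge0$, namely $\cD(tu)-\cD(u)\ge\tfrac{t^q-1}{q}\cD'(u)(u)$, by showing $\xi(t)=\cD(tu)-\tfrac{t^q}{q}\cD'(u)(u)$ attains its minimum at $t=1$; the pointwise inputs for the sign of $\xi'$ — $F(x,tu)\gtrless t^{q/2}F(x,u)$ and $f(x,tu)\gtrless t^{(q-2)/2}f(x,u)$ — come from (F4) and \eqref{AR-q}, the same two ingredients the paper uses to show $\psi$ monotone. Then you handle the $K$-term algebraically, using $\kappa(t)=\tfrac{t^q-1}{q}-\tfrac{t^2-1}{2}\ge0$ together with $0\le\int K|u|^q\,dx\le\tfrac12\cD'(u)(u)$ on $\cN$; the terms $\pm\tfrac{t^2-1}{4}\cD'(u)(u)$ cancel exactly, leaving precisely $-\tfrac12\bigl[\cD(tu)-\cD(u)-\tfrac{t^q-1}{q}\cD'(u)(u)\bigr]\le0$. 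The paper's version is more compact and tailored to the specific functional; yours isolates a self-contained inequality for $\cD$ alone that does not reference the Nehari manifold or the $K$-term, which makes the logical structure more transparent and would be reusable in other Choquard settings.
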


\begin{proof}
(J1) Since $\cI(u) =   \frac12 \cD(u) - \frac1q \int_{\R^N} K(x) |u(x)|^q \, dx$, by Lemma \ref{D_estimate} we have
\begin{align*}
	\cI(u) \leq   \frac12 \cD(u) \leq C \left( \|u\|_\mu^4 + \|u\|_\mu^{p+2} + \|u\|_\mu^{2p} \right) = C \|u\|_\mu^2 \left( \|u\|_\mu^2 + \|u\|_\mu^{p} + \|u\|_\mu^{2p-2} \right).
\end{align*}
Note that for $r > 0$ we have
\begin{align*}
	\cI(u) \leq C \|u\|_\mu^2 \left( r^2 + r^{p} + r^{2p-2} \right)
\end{align*}
for $\|u\|_\mu \leq r$. Put $A(r) := C \left( r^2 + r^{p} + r^{2p-2} \right)$ and then
\begin{align*}
	\cI(u) \leq A(r) \|u\|_\mu^2.
\end{align*}
Note that $A \colon [0, +\infty) \to [0, +\infty)$ is a continuous
function with $A(0) = 0$ and $\lim_{r \to +\infty} A(r) =
+\infty$. Hence we can take $r >0$ such that $A(r) = \frac14$, that is
\begin{align*}
	\cI(u) \leq \frac14 \|u\|_\mu^2. 
\end{align*}
Hence, for $\|u\|_\mu = r$
\begin{align*}
  \cE(u) = \frac12 \|u\|_\mu^2 - \cI(u) \geq \frac14 \|u\|_\mu^2 = \frac14 r^2 > 0.
\end{align*}
(J2) Since $t_n \to +\infty$, we may assume that $t_n \geq 1$ and we
recall that $q>2$. Then
\begin{multline*}
	\frac{\cI(t_nu_n)}{t_n^2} \geq \frac{\cI(t_nu_n)}{t_n^{q}} \\ 
	=  \frac12 \int_{\R^N \times \R^N} \frac{F(x,t_nu_n(x))F(y,t_nu_n(y))}{t_n^{q}|x-y|^{N-\alpha}} \, dx \, dy - \frac{1}{q} \int_{\R^N} K(x) |u_n(x)|^q \, dx \to +\infty
\end{multline*}
by (F3) and Fatou's lemma.

(J3) Let $u \in \cN$ and define
\begin{align*}
	\vp(t)=\dfrac{t^2-1}{2}\cI'(u)u - \cI(tu) + \cI(u)
\end{align*}
for $t \geq 0$; we remark that $\vp(1)=0$. Moreover, $\|u\|_\mu^2 = \cI'(u)u > 0$, which is equivalent to
\begin{equation} \label{in_1}
	 \int_{\R^N \times \R^N} \frac{F(x,u(x))f(y,u(y))u(y)}{|x-y|^{N-\alpha}} \, dx \, dy > \int_{\R^N} K(x) |u(x)|^q \, dx.
\end{equation}
We compute
\begin{align*}
	&\quad \dfrac{d\vp(t)}{dt}=t\cI'(u)u - \cI'(tu)u \\
	&= t \int_{\R^N \times \R^N} \frac{F(x,u(x))f(y,u(y))u(y)}{|x-y|^{N-\alpha}} \, dx \, dy - t\int_{\R^N} K(x) |u(x)|^q \, dx \\
	&\quad-  \int_{\R^N \times \R^N} \frac{F(x,tu(x))f(y,tu(y))u(y)}{|x-y|^{N-\alpha}} \, dx \, dy  + \int_{\R^N} K(x) |tu(x)|^q \, dx\\
	&= \int_{\R^N \times \R^N} \left[\frac{F(x,u(x))f(y,u(y))tu(y)}{|x-y|^{N-\alpha}} - \frac{F(x,tu(x))f(y,tu(y))u(y)}{|x-y|^{N-\alpha}}\right] \, dx \, dy\\
	&\quad+\left(t^{q-1}-t\right)\int_{\R^N} K(x) |u(x)|^q \, dx.
\end{align*}
For almost every fixed $x,y \in \R^N$ we define the map $\psi\colon (0,+\infty) \to \R$ as
\begin{equation}\label{psi}
	\psi(t) := \psi_{(x,y)}(t) := \frac{F(x,tu(x))f(y,tu(y))u(y)}{t^{q-1}}.
\end{equation}
If $t<1$ then by \eqref{in_1}, and using \eqref{psi}, we have
\begin{align*}
	&\dfrac{d\vp(t)}{dt}= \cI'(u)(tu)-\cI'(tu)(u) \\
	&\geq  \int_{\R^N \times \R^N} \left[\frac{F(x,u(x))f(y,u(y))tu(y)}{|x-y|^{N-\alpha}} - \frac{F(x,tu(x))f(y,tu(y))u(y)}{|x-y|^{N-\alpha}}\right. \\
	&\quad\left.+ \frac{t^{q-1}F(x,u(x))f(y,u(y))u(y)}{|x-y|^{N-\alpha}} - \frac{F(x,u(x))f(y,u(y))tu(y)}{|x-y|^{N-\alpha}}\right]	\, dx \, dy\\
	&= \int_{\R^N \times \R^N} \left[\frac{t^{q-1}F(x,u(x))f(y,u(y))u(y)}{|x-y|^{N-\alpha}} - \frac{F(x,tu(x))f(y,tu(y))u(y)}{|x-y|^{N-\alpha}}\right] \, dx \, dy\\
	&= t^{q-1} \int_{\R^N \times \R^N} \frac{\psi_{(x,y)}(1) - \psi_{(x,y)}(t)}{|x-y|^{N-\alpha}} \, dx \, dy.
\end{align*}
We claim that $\psi(t) \geq 0$ and $\psi$ is non-decreasing on $(0, 1]$, which completes the proof in the case $t < 1$. From (F3) and \eqref{AR-q} there follows that $\psi(t) \geq 0$. We rewrite \eqref{psi} in the form
\begin{gather*}
\psi(t) = \frac{F(x,tu(x))}{t^{\frac{q}{2}}} \frac{f(y,tu(y))u(y)}{t^{\frac{q}{2}-1}}
\end{gather*}
Again, taking \eqref{AR-q} into account
\begin{align*}
	\frac{d}{dt} \left[ \frac{F(x,tu(x))}{t^{\frac{q}{2}}} \right] = \frac{f(x,tu(x))tu(x) - \frac{q}{2}F(x,tu(x))}{t^{\frac{q}{2}+1}} \geq 0,
\end{align*}
so $t \mapsto \frac{F(x,tu(x))}{t^{\frac{q}{2}}}$ is non-decreasing on $(0,+\infty)$. Moreover
\begin{align*}
	\frac{f(y,tu(y))u(y)}{t^{\frac{q}{2}-1}} = \frac{f(y,tu(y))u(y)}{|t u(y)|^{\frac{q}{2}-1}} |u(y)|^{\frac{q}{2}-1} = \frac{f(y,tu(y))}{|t u(y)|^{\frac{q}{2}-1}} |u(y)|^{\frac{q}{2}-1} u(y)
\end{align*}
so that 
\begin{align*}
	t \mapsto \frac{f(y,tu(y))u(y)}{t^{\frac{q}{2}-1}}
\end{align*}
is non-decreasing by (F4), if $u(y) \neq 0$. Hence $\psi$ is non-decreasing as a product of non-negative, non-decreasing functions. Hence
\begin{align*}
	\dfrac{d\vp(t)}{dt}=\cI'(u)(tu) - \cI'(tu)(u) \geq  t^{q-1} \int_{\R^N \times \R^N} \frac{\psi_{(x,y)}(1)-\psi_{(x,y)}(t)}{|x-y|^{N-\alpha}} \, dx \,dy \geq 0,
\end{align*}
that is $\vp(t) \leq \vp(1)=0$ for $t \in (0,1]$.

Similarly we show that if $t \in (1,+\infty)$ then
\begin{align*}
	\dfrac{d\vp(t)}{dt}=\cI'(u)(tu) - \cI'(tu)(u) \leq 0,
\end{align*}
therefore $\vp(t) \leq \vp(1)=0$ for $t \in (1,+\infty)$.
\end{proof}

\begin{Lem}
\label{Cer_bdd}
Any Cerami sequence $\{ u_n\}_n$ for $\cE$ is bounded.
\end{Lem}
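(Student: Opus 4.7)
The plan is to exploit the Ambrosetti--Rabinowitz type inequality \eqref{AR-q}, which followed from (F3)--(F4), by computing the combination $\cE(u_n) - \frac{1}{q} \cE'(u_n)(u_n)$ and showing that the nonlocal Choquard term becomes favorable while the power term $\int K|u|^q$ drops out entirely.

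First I would observe that, since $\{u_n\}$ is a Cerami sequence, $(1+\|u_n\|_\mu)\cE'(u_n) \to 0$ in $(H^{1/2}(\R^N))^*$ and so
\begin{equation*}
|\cE'(u_n)(u_n)| \leq \|\cE'(u_n)\|_* \|u_n\|_\mu \leq \frac{\|u_n\|_\mu}{1+\|u_n\|_\mu}\cdot o(1) = o(1),
\end{equation*}
while $\cE(u_n) \to c$. Hence $\cE(u_n) - \frac{1}{q}\cE'(u_n)(u_n) = c + o(1)$.

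Next I would explicitly evaluate this combination. Using the expressions
\begin{equation*}
\cE(u) = \tfrac12\|u\|_\mu^2 - \tfrac12 \cD(u) + \tfrac1q \int_{\R^N} K(x)|u|^q\,dx
\end{equation*}
and
\begin{equation*}
\cE'(u)(u) = \|u\|_\mu^2 - \int_{\R^N\times\R^N}\frac{f(x,u(x))u(x)F(y,u(y))}{|x-y|^{N-\alpha}}\,dx\,dy + \int_{\R^N} K(x)|u|^q\,dx,
\end{equation*}
the $\int K|u_n|^q$ terms cancel since both appear with coefficient $1/q$, and a short calculation gives
\begin{equation*}
\cE(u_n) - \tfrac{1}{q}\cE'(u_n)(u_n) = \Bigl(\tfrac12 - \tfrac1q\Bigr)\|u_n\|_\mu^2 + \tfrac{1}{q}\int_{\R^N\times\R^N}\frac{F(y,u_n(y))\bigl[f(x,u_n(x))u_n(x) - \tfrac{q}{2}F(x,u_n(x))\bigr]}{|x-y|^{N-\alpha}}\,dx\,dy.
\end{equation*}

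The key observation is that the double integral is non-negative: by (F3), $F(y,u_n(y))\geq 0$, and by the Ambrosetti--Rabinowitz inequality \eqref{AR-q}, the bracket $f(x,u_n(x))u_n(x) - \tfrac{q}{2}F(x,u_n(x))\geq 0$ a.e. Consequently
\begin{equation*}
\Bigl(\tfrac12 - \tfrac1q\Bigr)\|u_n\|_\mu^2 \leq \cE(u_n) - \tfrac{1}{q}\cE'(u_n)(u_n) = c + o(1),
\end{equation*}
and since $q > 2$ by (N), the prefactor $\tfrac12 - \tfrac1q$ is strictly positive. This yields $\|u_n\|_\mu^2 \leq C$, proving boundedness. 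I don't foresee any real obstacle here: the proof is a standard Ambrosetti--Rabinowitz boundedness argument, and the only thing worth emphasizing is that it is precisely the subquadratic defocusing term $+\tfrac{1}{q}\int K|u|^q$ entering with the ``correct'' exponent $q$ that allows the cancellation, which is why assumptions (F3)--(F4) are tuned to $q/2$ rather than to a more standard threshold.
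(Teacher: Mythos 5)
Your proposal is correct and follows essentially the same approach as the paper: both compute $\cE(u_n) - \tfrac{1}{q}\cE'(u_n)(u_n)$, observe that the $\int K|u_n|^q$ terms cancel because both $\cE$ and $\tfrac{1}{q}\cE'(u_n)(u_n)$ carry that term with coefficient $1/q$, and use \eqref{AR-q} together with $F \geq 0$ to conclude that the remaining double integral is non-negative, leaving the coercive term $(\tfrac12-\tfrac1q)\|u_n\|_\mu^2$ bounded. Your added remark that the $q$-tuning of (F3)--(F4) is precisely what makes this cancellation possible is exactly the right thing to emphasize.
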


\begin{proof}
	By the properties of Cerami's sequences, we may write
	\begin{align*}
	\limsup_{n \to +\infty} \cE(u_n) &= \limsup_{n \to +\infty} \left( \cE(u_n) - \frac{1}{q} \cE'(u_n) u_n \right) \\
	&= \limsup_{n \to +\infty} \left[\left(\frac12-\frac1q\right)\|u_n\|^2_{\mu} \right. \\
	& \quad \left. {} + \frac1q \int_{\R^N \times \R^N} \frac{F(x,u_n(x)) \left[  f(y,u_n(y))u_n(y) - \frac{q}{2} F(y,u_n(y)) \right]  }{|x-y|^{N-\alpha}} \, dx \, dy \right] \\
	&\geq \limsup_{n \to +\infty} \left(\frac12-\frac1q\right)\|u_n\|^2_{\mu}
	\end{align*}
	by \eqref{AR-q}. Since $\limsup_{n \to +\infty} \cE(u_n)$ is finite, the proof is complete.
	
\end{proof}

\section{Decomposition of bounded minimizing sequences}

To ease notation we set
\begin{equation*}
  I_\alpha (x) := \frac{1}{|x|^{N-\alpha}}, \quad x \in \mathbb{R}^N \setminus \{0\}.
\end{equation*}

\begin{Lem}\label{Brezis-Lieb-D}
  Suppose that $\{u_n\}_n \subset H^{1/2}(\R^N)$ is a bounded sequence
  such that $u_n \weakto u_0$ in $H^{1/2} (\R^N)$. Then
\begin{equation*}
\cD (u_n - u_0) - \cD(u_n) + \cD(u_0)   \to 0 \ \mbox{as} \ n \to +\infty.
\end{equation*}
\end{Lem}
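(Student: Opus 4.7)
The plan is to prove this Brezis--Lieb type identity by combining a Brezis--Lieb lemma for the composition $u\mapsto F(\cdot,u)$ with the bilinearity of the Riesz potential pairing. Throughout, set $v_n := u_n - u_0$, so that $v_n \weakto 0$ in $H^{1/2}(\R^N)$; passing to a subsequence, $v_n \to 0$ a.e., and by Sobolev embedding $\{v_n\}$ is bounded in $L^s(\R^N)$ for every $s \in [2, 2N/(N-1)]$. Introduce the symmetric bilinear form
\begin{equation*}
B(g,h) := \int_{\R^N\times\R^N}\frac{g(x)\,h(y)}{|x-y|^{N-\alpha}}\,dx\,dy,
\end{equation*}
so that $\cD(u) = B(F(\cdot,u), F(\cdot,u))$. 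By (F1) one has $|F(x,u)| \leq C(|u|^{(N+\alpha)/N}+|u|^p)$, hence $F(\cdot,w) \in L^r(\R^N)$ with $r := 2N/(N+\alpha)$ whenever $w \in L^2\cap L^{2Np/(N+\alpha)}$, and condition (N) guarantees $pr \leq 2N/(N-1)$. In particular $\{F(\cdot,u_n)\}$, $\{F(\cdot,v_n)\}$ and $F(\cdot,u_0)$ all lie in a bounded subset of $L^r(\R^N)$, and the Hardy--Littlewood--Sobolev inequality gives $|B(g,h)| \leq C\|g\|_r \|h\|_r$.

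The key intermediate claim is the Brezis--Lieb statement
\begin{equation*}
\rho_n := F(\cdot,u_n) - F(\cdot,v_n) - F(\cdot,u_0) \longrightarrow 0 \quad \text{in } L^r(\R^N).
\end{equation*}
Since $v_n \to 0$ a.e., we have $\rho_n \to 0$ a.e. For the $L^r$-convergence I would apply Vitali's theorem, using the pointwise estimate
\begin{equation*}
|F(x,a+b)-F(x,a)-F(x,b)| \leq \varepsilon\bigl(|a|^{(N+\alpha)/N}+|a|^p\bigr) + C_\varepsilon\bigl(|b|^{(N+\alpha)/N}+|b|^p\bigr)
\end{equation*}
(derived from (F1), Young's inequality, and integrating $f$), with $a = v_n$ and $b = u_0$. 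The first term on the right is uniformly small in $L^r$ for small $\varepsilon$, while the second is dominated by an $L^r$-function independent of $n$, yielding equi-integrability and hence $\|\rho_n\|_r \to 0$.

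Once this is in hand, expand bilinearly:
\begin{align*}
\cD(u_n) &= B(F(\cdot,v_n)+F(\cdot,u_0)+\rho_n,\ F(\cdot,v_n)+F(\cdot,u_0)+\rho_n) \\
&= \cD(v_n) + \cD(u_0) + 2B(F(\cdot,v_n), F(\cdot,u_0)) + R_n,
\end{align*}
where $R_n$ is a finite sum of terms each containing at least one factor $\rho_n$. By HLS, $|R_n| \leq C\|\rho_n\|_r \to 0$. It remains to show $B(F(\cdot,v_n), F(\cdot,u_0)) \to 0$. Since $F(\cdot,u_0) \in L^r$, the convolution $I_\alpha * F(\cdot,u_0)$ lies in $L^{r'}$ by HLS; on the other hand, $\{F(\cdot,v_n)\}$ is bounded in $L^r$ and converges to $0$ a.e., hence $F(\cdot,v_n) \weakto 0$ in $L^r$. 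Testing with $I_\alpha * F(\cdot,u_0) \in L^{r'}$ gives $B(F(\cdot,v_n), F(\cdot,u_0)) \to 0$, which completes the proof.

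The main obstacle is the Brezis--Lieb convergence $\rho_n \to 0$ in $L^r$: the nonlinearity $F$ is not homogeneous, involves two competing growth rates $|u|^{(N+\alpha)/N}$ and $|u|^p$, and the ambient space $L^r$ is not $L^1$, so a direct application of the classical Brezis--Lieb lemma is not available. The Vitali equi-integrability argument (or an equivalent splitting into tails and pointwise decay on bounded sets) is what delivers this, and all subsequent steps reduce to bookkeeping via HLS.
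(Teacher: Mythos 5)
Your argument is correct and follows essentially the same route that the paper defers to in \cite[Lemma 2.2]{CassaniZhang}: reduce to the Brezis--Lieb convergence $\rho_n := F(\cdot,u_n)-F(\cdot,u_n-u_0)-F(\cdot,u_0)\to 0$ in $L^{2N/(N+\alpha)}$ via the pointwise estimate from (F1), Young's inequality and a Vitali/dominated-convergence argument, then expand $\cD$ bilinearly, bound the $\rho_n$-terms by HLS, and kill the remaining cross term using the weak convergence $F(\cdot,u_n-u_0)\weakto 0$ in $L^r$ paired against $I_\alpha * F(\cdot,u_0)\in L^{r'}$. All exponent checks (in particular $r=2N/(N+\alpha)\in(1,\infty)$ and $pr<2N/(N-1)$ under (N)) are correct, and the only elided routine point --- passing to a.e.\ convergent subsequences while concluding for the full sequence --- is the usual sub-subsequence remark.
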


The proof is similar to the proof of \cite[Lemma 2.2]{CassaniZhang} in
the space $H^1 (\R^N)$, so we omit it.

\begin{Lem}\label{BrezisLiebDecomp}
  Suppose that $\{u_n\}_n \subset H^{1/2}(\R^N)$ and there are
  $\ell \geq 0$, $\{z_n^k\}_n \subset \Z^N$ and
  $w^k \in H^{1/2} (\R^N)$ for $k=1,\ldots, \ell$ such that
  $u_n \weakto u_0$, $u_n(\cdot - z_n^k) \weakto w^k$ in
  $x \in H^{1/2}(\R^N)$ and
\begin{equation}\label{x}
\left\| u_n - u_0 - \sum_{k=1}^\ell w^k(\cdot - z_n^k) \right\| \to 0.
\end{equation}
Then
\begin{equation*}
\cD(u_n) \to \cD(u_0) + \sum_{k=1}^\ell \cD(w^k).
\end{equation*}
\end{Lem}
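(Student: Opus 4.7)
The plan is to argue by induction on $\ell$, peeling off one bubble at a time by combining Lemma~\ref{Brezis-Lieb-D} with the $\Z^N$-translation invariance of $\cD$ (which follows from the $\Z^N$-periodicity of $F(\cdot,t)$ built into (F1): for any $z\in\Z^N$ and $v\in H^{1/2}(\R^N)$, the change of variables $x\mapsto x-z$ in the defining double integral and the periodicity of $F$ give $\cD(v(\cdot-z))=\cD(v)$). Throughout, we implicitly use the standard separation of concentration points, $|z_n^k|\to\infty$ and $|z_n^k-z_n^j|\to\infty$ for $k\neq j$, as is customary for profile decompositions.

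The base case $\ell=0$ reduces to $\|u_n-u_0\|\to 0$ in $H^{1/2}(\R^N)$; Lemma~\ref{Brezis-Lieb-D} then gives $\cD(u_n)-\cD(u_0)=\cD(u_n-u_0)+o(1)$, and Lemma~\ref{D_estimate} applied to the sequence $u_n-u_0$ yields $\cD(u_n-u_0)\to 0$, hence $\cD(u_n)\to\cD(u_0)$.

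For the inductive step, assume the statement is known for $\ell-1$ and set $\tilde u_n:=u_n-w^\ell(\cdot-z_n^\ell)$. A direct check using the separation conditions shows that $\tilde u_n$ satisfies the hypotheses of the lemma with the remaining $\ell-1$ bubbles $w^1,\dots,w^{\ell-1}$: one has $\tilde u_n\weakto u_0$ since $w^\ell(\cdot-z_n^\ell)\weakto 0$; the appropriately shifted sequences of $\tilde u_n$ still weakly converge to $w^k$ for $k<\ell$, because the extra translate of $w^\ell$ picks up a shift of size $|z_n^\ell-z_n^k|\to\infty$ and thus drops weakly to $0$; and the norm-smallness is inherited verbatim, $\tilde u_n-u_0-\sum_{k=1}^{\ell-1}w^k(\cdot-z_n^k)=u_n-u_0-\sum_{k=1}^{\ell}w^k(\cdot-z_n^k)\to 0$. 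The inductive hypothesis then yields $\cD(\tilde u_n)\to\cD(u_0)+\sum_{k=1}^{\ell-1}\cD(w^k)$.

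To close the induction we extract the contribution of the $\ell$-th bubble. Shifting $u_n$ so as to bring this bubble to the origin produces a sequence $v_n$ with $v_n\weakto w^\ell$ in $H^{1/2}(\R^N)$; Lemma~\ref{Brezis-Lieb-D} applied to $v_n$ gives $\cD(v_n)=\cD(v_n-w^\ell)+\cD(w^\ell)+o(1)$. By the $\Z^N$-translation invariance of $\cD$ (applied with the integer translation $z_n^\ell$) we have $\cD(v_n)=\cD(u_n)$ and, shifting back, $\cD(v_n-w^\ell)=\cD(\tilde u_n)$. Hence $\cD(u_n)=\cD(\tilde u_n)+\cD(w^\ell)+o(1)\to\cD(u_0)+\sum_{k=1}^{\ell}\cD(w^k)$. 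The main technical point is the careful bookkeeping of translations and the verification that subtracting one bubble does not spoil the weak limits associated with the other bubbles; the separation of the $z_n^k$'s is exactly what is needed for this, and once it is in place, the single-bubble Brezis--Lieb lemma together with the translation invariance of $\cD$ carries the rest of the argument.
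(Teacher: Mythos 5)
Your proof is correct and takes essentially the same approach as the paper's: iteratively peeling off one bubble at a time via Lemma~\ref{Brezis-Lieb-D}, the $\Z^N$-translation invariance of $\cD$ (coming from the periodicity of $F$), and the continuity of $\cD$ at the origin from Lemma~\ref{D_estimate}. The only difference is presentational — you organize the peeling as an induction on $\ell$, whereas the paper writes it as a telescoping iteration with $a_m^n := u_n - u_0 - \sum_{k=1}^m w^k(\cdot - z_n^k)$ — and you make explicit the implicitly used separation $|z_n^k|\to\infty$, $|z_n^k-z_n^{k'}|\to\infty$, which the paper also relies on (it is guaranteed in the context of Theorem~\ref{ThSplitting}).
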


\begin{proof}
For any $m \in \{ 0, 1, \ldots, \ell\}$ we introduce
\begin{equation*}
a_m^n := u_n - u_0 - \sum_{k=1}^m w^k \left( \cdot - z_n^k \right).
\end{equation*}
From Lemma \ref{Brezis-Lieb-D} we have that
\begin{equation*}
\cD(a_0^n) - \cD(u_n) + \cD(u_0) \to 0.
\end{equation*}
Taking $a_0^n \left( \cdot + z_n^1 \right)$ as $u_n$ and $w^1$ as
$u_0$ in Lemma \ref{Brezis-Lieb-D} we obtain
\begin{equation*}
\cD \left( a_0^n \left( \cdot + z_n^1 \right) - w^1 \right) - \cD \left( a_0^n \left( \cdot + z_n^1 \right) \right) + \cD(w^1) \to 0
\end{equation*}
or equivalently
\begin{equation}\label{D1}
\cD (a_1^n) - \cD(a_0^n) + \cD(w^1) \to 0.
\end{equation}
Similarly
\begin{equation}\label{D2}
\cD(a_2^n) - \cD(a_1^n) + \cD(w^2) \to 0.
\end{equation}
Combining \eqref{D1} and \eqref{D2} gives
\begin{gather*}
\cD(a_2^n) + \cD(w^2) + \cD(w^1) - \cD(a_0^n) \to 0.
\end{gather*}
Iterating the same reasoning we obtain
\begin{gather*}
\cD(a_\ell^n) + \sum_{k=1}^\ell \cD(w^k) - \cD(a_0^n) \to 0.
\end{gather*}
Taking into account that $a_0^n = u_n - u_0$ we see that
\begin{equation}\label{D3}
\cD(a_\ell^n) + \sum_{k=1}^\ell \cD(w^k) - \cD(u_n-u_0) \to 0.
\end{equation}
In view of \eqref{x} we obtain $a_\ell^n \to 0$, so that
\begin{gather*}
\cD(a_\ell^n) \to 0
\end{gather*}
and \eqref{D3} reads as 
\begin{gather*}
\cD(u_n-u_0) \to \sum_{k=1}^\ell \cD(w^k).
\end{gather*}
Taking again Lemma \ref{Brezis-Lieb-D} into account we obtain
\begin{gather*}
\cD(u_n) \to \cD(u_0) + \sum_{k=1}^\ell \cD(w^k).
\end{gather*}
\end{proof}

\begin{Lem}\label{weak-conv-D}
  $\cD' \colon H^{1/2} (\R^N) \rightarrow \left( H^{1/2}(\R^N) \right)^* =
  H^{-1/2}(\R^N)$ is weak-to-weak* continuous, i.e. if $\{u_n\}_n$ is
  bounded and $u_n \weakto u_0$ in $H^{1/2} (\R^N)$ and
  $\varphi \in H^{1/2} (\R^N)$ then
\begin{gather*}
\cD'(u_n)(\varphi) \to \cD'(u_0)(\varphi).
\end{gather*}
\end{Lem}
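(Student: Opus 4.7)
The plan is to rewrite $\cD'(u)(\varphi)$ as a dual pairing in a Lebesgue space and then pass to the limit using weak--strong duality. Direct differentiation of the nonlocal quadratic form gives
\begin{equation*}
\cD'(u)(\varphi) = 2 \int_{\R^N} \bigl(I_\alpha * F(\cdot,u)\bigr)(y) \, f(y,u(y)) \, \varphi(y) \, dy,
\end{equation*}
and estimates in the spirit of Lemma~\ref{D_estimate} (Hardy--Littlewood--Sobolev, H\"older, Sobolev) yield a bound of the form $|\cD'(u_n)(\varphi)| \leq C(\|u_n\|_\mu^3+\|u_n\|_\mu^{p+1}+\|u_n\|_\mu^{2p-1})\,\|\varphi\|_\mu$, so $\{\cD'(u_n)\}$ is bounded in $H^{-1/2}(\R^N)$. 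By density of $C_c^\infty(\R^N)$ in $H^{1/2}(\R^N)$, it is therefore enough to verify the convergence for every test function $\varphi \in C_c^\infty(\R^N)$.

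Fix such a $\varphi$ and set $r := 2N/(N+\alpha)$. Using (F1) together with (N) one checks that $(1+\alpha/N)r = 2$ and $pr \leq 2N/(N-1)$, so by the Sobolev embedding $H^{1/2}(\R^N) \hookrightarrow L^s(\R^N)$ for $s \in [2, 2N/(N-1)]$ the sequence $\{F(\cdot,u_n)\}$ is bounded in $L^r(\R^N)$. Passing to a subsequence so that $u_n \to u_0$ almost everywhere, continuity of $F(x,\cdot)$ gives $F(\cdot,u_n) \to F(\cdot,u_0)$ pointwise a.e.; combined with the uniform $L^r$-bound, the standard fact that a bounded sequence in a reflexive $L^r$-space which converges a.e.\ converges weakly to its pointwise limit yields $F(\cdot,u_n) \weakto F(\cdot,u_0)$ in $L^r(\R^N)$. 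The HLS inequality identifies $I_\alpha *$ as a bounded linear operator $L^r(\R^N) \to L^{r'}(\R^N)$ with $r' = 2N/(N-\alpha)$, whence
\begin{equation*}
I_\alpha * F(\cdot,u_n) \weakto I_\alpha * F(\cdot,u_0) \quad \text{in } L^{r'}(\R^N).
\end{equation*}

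The companion factor $f(\cdot,u_n)\varphi$ is supported in the compact set $K := \supp\varphi$; the compact embedding $H^{1/2}(\R^N) \hookrightarrow L^s(K)$ for $s < 2N/(N-1)$ gives $u_n \to u_0$ strongly in $L^s(K)$ and, together with the growth condition (F1), a dominated convergence / Nemytskii continuity argument then shows $f(\cdot,u_n)\varphi \to f(\cdot,u_0)\varphi$ strongly in $L^r(\R^N)$. Pairing the weakly convergent sequence in $L^{r'}(\R^N)$ with this strongly convergent one in $L^r(\R^N)$ gives $\cD'(u_n)(\varphi) \to \cD'(u_0)(\varphi)$ along the chosen subsequence; since the limit is independent of the subsequence, the full sequence converges. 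The main subtlety lies precisely in the \emph{global} weak convergence $F(\cdot,u_n) \weakto F(\cdot,u_0)$ in $L^r(\R^N)$: only local strong convergence of $u_n$ is at hand, so a direct strong-convergence argument on all of $\R^N$ would fail due to possible loss of mass at infinity, and one must rely on the bounded-plus-pointwise-implies-weak principle, after which the compactness of $\supp\varphi$ reduces the second factor to a local setting where strong convergence does hold.
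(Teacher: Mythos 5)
Your argument follows essentially the same route as the paper's: boundedness of $\{F(\cdot,u_n)\}$ in $L^{2N/(N+\alpha)}$, pointwise a.e.\ convergence upgraded to weak $L^r$ convergence, Hardy--Littlewood--Sobolev to transfer this to weak convergence of $I_\alpha * F(\cdot,u_n)$ in $L^{2N/(N-\alpha)}$, and local strong convergence of the factor involving $f(\cdot,u_n)$ and the compactly supported test function, combined via weak--strong duality. The one visible difference is that you make explicit the preliminary reduction to $\varphi \in C_c^\infty(\R^N)$ via the uniform $H^{-1/2}$-bound on $\cD'(u_n)$ and a density argument, whereas the paper tests directly against smooth compactly supported $\varphi$ and leaves the extension to general $\varphi \in H^{1/2}(\R^N)$ implicit; making this step explicit is a small but genuine improvement in completeness, not a different method.
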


\begin{proof}
Observe that
\begin{gather*}
\cD'(u_n)(\varphi) = \int_{\R^N} \left( I_\alpha * (F(\cdot, u_n (\cdot)) \right)(x) f(x,u_n(x))\varphi(x) \, dx,
\end{gather*}
which (for brevity) we will write shortly as
\begin{gather*}
\cD'(u_n)(\varphi) = \int_{\R^N} (I_\alpha * (F \circ u_n)) (f \circ u_n) \varphi \, dx.
\end{gather*}
Since $\{u_n\}_n$ is bounded in $H^{1/2} (\R^N)$, it is bounded also
in $L^2 (\R^N) \cap L^\frac{2N}{N-1} (\R^N)$. From \eqref{F-eps} there
follows that $\{ F (\cdot, u_n (\cdot) ) \}_n$ is bounded in
$L^{\frac{2N}{N+\alpha}} (\R^N)$, since (N) implies that
$p \cdot \frac{2N}{N+\alpha} < \frac{2N}{N-1}$. The weak convergence
$u_n \weakto u_0$ in $H^{1/2}(\R^N)$ implies that $u_n(x) \to u_0(x)$
for a.e. $x \in \R^N$. Hence $F(x,u_n(x)) \to F(x,u_0(x))$ for a.e.
$x \in \R^N$, and therefore
$F(\cdot, u_n (\cdot)) \weakto F(\cdot, u_0 (\cdot))$ in
$L^{\frac{2N}{N+\alpha}} (\R^N)$. From the Hardy-Littlewood-Sobolev
inequality we obtain that
\begin{gather*}
I_\alpha * (F \circ u_n) \weakto I_\alpha * (F \circ u_0) \mbox{ in } L^\frac{2N}{N-\alpha} (\R^N).
\end{gather*}
Moreover, from (F1), $f \circ u_n \to f \circ u_0$ in
$L^{\frac{2N}{N+\alpha}}_{\loc} (\R^N)$. Hence, for any
$\varphi \in \cC_0^\infty (\R^N)$ there holds
\begin{gather*}
  (I_\alpha * (F \circ u_n)) (f \circ u_n) \varphi \to (I_\alpha * (F
  \circ u_0)) (f \circ u_0) \varphi \mbox{ in } L^1 (\R^N),
\end{gather*}
i.e. $\cD'(u_n)(\varphi) \to \cD'(u_0)(\varphi)$.
\end{proof}

\begin{Cor}\label{weak-conv-E}
  $\cE' \colon H^{1/2} (\R^N) \rightarrow \left( H^{1/2}(\R^N)
  \right)^* = H^{-1/2}(\R^N)$ is weak-to-weak* continuous.
\end{Cor}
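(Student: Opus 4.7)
The corollary should follow quickly by decomposing $\cE'$ and handling each piece separately. Writing
\begin{equation*}
\cE'(u)(\varphi) = \langle u, \varphi \rangle_\mu - \frac12 \cD'(u)(\varphi) + \int_{\R^N} K(x) |u|^{q-2} u\, \varphi \, dx,
\end{equation*}
I would verify weak-to-weak* continuity of each of the three terms in turn.

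The first term is immediate: by definition of weak convergence in $(H^{1/2}(\R^N), \|\cdot\|_\mu)$, one has $\langle u_n, \varphi \rangle_\mu \to \langle u_0, \varphi \rangle_\mu$ for every $\varphi \in H^{1/2}(\R^N)$. The second term is exactly the content of the previous Lemma \ref{weak-conv-D}, so there is nothing to do there.

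The real work lies in the third term. Since $u_n \weakto u_0$ in $H^{1/2}(\R^N)$, the sequence is bounded, and by assumption (N) we have $q < 2N/(N-1)$, so by the Sobolev embedding $\{u_n\}$ is bounded in $L^q(\R^N)$. I would first fix $\varphi \in \cC_0^\infty(\R^N)$, use Rellich-type compactness to pass (along a subsequence) to $u_n \to u_0$ in $L^q(\supp \varphi)$ and a.e., and then apply continuity of the Nemytskii operator $u \mapsto |u|^{q-2} u$ from $L^q$ to $L^{q/(q-1)}$ on bounded sets to conclude that $\int K |u_n|^{q-2} u_n \varphi \, dx \to \int K |u_0|^{q-2} u_0 \varphi \, dx$; the Urysohn subsequence principle then removes the passage to a subsequence.

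To extend from test functions to general $\varphi \in H^{1/2}(\R^N)$, I would use density of $\cC_0^\infty(\R^N)$ together with a uniform equicontinuity estimate: by H\"older and the Sobolev embedding,
\begin{equation*}
\left| \int_{\R^N} K |u_n|^{q-2} u_n \varphi \, dx \right| \leq |K|_\infty\, |u_n|_q^{q-1} |\varphi|_q \leq C \|\varphi\|_\mu,
\end{equation*}
so the family of linear functionals $\varphi \mapsto \int K |u_n|^{q-2} u_n \varphi \, dx$ is uniformly bounded in $H^{-1/2}(\R^N)$, and pointwise convergence on the dense subspace $\cC_0^\infty(\R^N)$ upgrades to pointwise convergence on all of $H^{1/2}(\R^N)$. (The same density argument also covers the passage from $\cC_0^\infty$ to $H^{1/2}$ in Lemma \ref{weak-conv-D}.) No step presents a serious obstacle; the only mildly subtle point is keeping the $L^{q/(q-1)}$ duality straight when handling the pure-power term.
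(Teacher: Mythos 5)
Your proof is correct and follows essentially the same strategy as the paper: decompose $\cE'$ into three pieces, with the inner-product term immediate from weak convergence, the Choquard term handled by Lemma \ref{weak-conv-D}, and the local power term requiring a short argument. The one place you diverge is in the treatment of that power term. The paper passes to a subsequence with $u_n \to u_0$ a.e., shows uniform integrability of $\{K|u_n|^{q-2}u_n\varphi\}_n$ on $\supp\varphi$ via the H\"older bound $\bigl|\int_E K|u_n|^{q-2}u_n\varphi\,dx\bigr|\leq|K|_\infty|u_n|_q^{q-1}|\varphi\chi_E|_q$, and applies the Vitali convergence theorem; you instead invoke local compactness (Rellich) of $H^{1/2}\hookrightarrow L^q(\supp\varphi)$ together with continuity of the Nemytskii map $u\mapsto|u|^{q-2}u$ from $L^q$ to $L^{q/(q-1)}$. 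Both routes are sound and of comparable length; the Vitali argument is marginally leaner because it needs only a.e.\ convergence and a tail estimate, not a compact embedding, while yours is perhaps the more standard reflex. You also spell out the density step (uniform boundedness of $\cE'(u_n)$ in $H^{-1/2}(\R^N)$ plus convergence against the dense class $\cC_0^\infty(\R^N)$) needed to pass to arbitrary $\varphi\in H^{1/2}(\R^N)$, a point the paper leaves implicit both here and in Lemma \ref{weak-conv-D}; making it explicit is a genuine improvement.
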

\begin{proof}
Indeed, take $u_n \weakto u_0$ in $H^{1/2} (\R^N)$ and $\varphi \in \cC_0^\infty (\R^N)$, we have that 
\begin{gather*}
\cE'(u_n)(\varphi) = \langle u_n, \varphi \rangle_\mu - \frac12 \cD'(u_n)(\varphi) + \int_{\R^N} K (x) |u_n(x)|^{q-2} u_n(x) \varphi(x) \, dx.
\end{gather*}
We may assume that, up to a subsequence, $u_n(x) \to u_0(x)$ for a.e. $x \in \R^N$. Obviously $\langle u_n, \varphi \rangle \to \langle u_0, \varphi \rangle_\mu$. Moreover, from Lemma \ref{weak-conv-D}, $\cD'(u_n)(\varphi) \to \cD'(u_0)(\varphi)$. Note that for any measurable subset $E \subset \supp \varphi$,
\begin{align*}
\left| \int_{E} K (x) |u_n(x)|^{q-2} u_n(x) \varphi(x) \, dx \right| \leq | K  |_\infty |u_n|^{q-1}_q | \varphi \chi_E |_q,
\end{align*}
so that in view of the Vitali convergence theorem
\begin{gather*}
\int_{\R^N} K (x) |u_n(x)|^{q-2} u_n(x) \varphi(x) \, dx \to \int_{\R^N} K (x) |u_0(x)|^{q-2} u_0(x) \varphi(x) \, dx
\end{gather*}
and we conclude.
\end{proof}

Define
\begin{equation}\label{Eper}
\cE_{\mathrm{per}} (u) := \cE(u) - \frac{1}{2} \int_{\R^N} V_{l}(x) u^2 \, dx + \frac{\mu}{2} \int_{\R^N} \frac{u^2}{|x|} \, dx.
\end{equation}
Note that $\cE_{\mathrm{per}} (u(\cdot - z)) = \cE_{\mathrm{per}}(u)$ for any $z \in \Z^N$.

\begin{Th}\label{ThSplitting}
Let $\{u_n\}_n$ be a bounded Palais-Smale sequence. Then (up to a subsequence) there is an integer $\ell \geq 0$ and sequences $(z_n^k) \subset \mathbb{Z}^{N}$, $w^k \in H^{1/2} (\R^{N})$, $k =1,\ldots, \ell$ such that
\begin{enumerate}
\item[(i)] $u_n \weakto u_0$ and $\cE'(u_0) = 0$;
\item[(ii)] $|z_n^k| \to +\infty$ and $|z_n^k - z_n^{k'}| \to +\infty$ for $k \neq k'$;
\item[(iii)] $w^k \neq 0$ and $\cE_{\mathrm{per}} ' (w^k) = 0$ for $1 \leq k \leq \ell$;
\item[(iv)] $u_n - u_0 - \sum_{k=1}^\ell w^k (\cdot - z_n^k) \to 0$;
\item[(v)] $\cE(u_n) \to \cE(u_0) + \sum_{k=1}^\ell \cE_{\mathrm{per}} (w^k)$.
\end{enumerate}
\end{Th}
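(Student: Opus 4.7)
The plan is to extract ``bumps'' one at a time by an iterative concentration-compactness procedure, starting from the weak limit $u_0$ and successively removing escaping mass from the remainder, exploiting the $\Z^N$-invariance of the periodic part of $\cE$.

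\textbf{Step 1 (weak limit and first profile).} Since $\{u_n\}_n$ is bounded in $H^{1/2}(\R^N)$, up to a subsequence $u_n \weakto u_0$. By Corollary~\ref{weak-conv-E}, $\cE'(u_n) \to 0$ combined with $u_n \weakto u_0$ gives $\cE'(u_0) = 0$, proving (i). Set $v_n^1 := u_n - u_0$, so that $v_n^1 \weakto 0$ and $v_n^1 \to 0$ in $L^2_{\loc}$ by Rellich. If $\|v_n^1\|_\mu \to 0$ we stop with $\ell = 0$; otherwise a fractional Lions-type vanishing lemma applied to $\{v_n^1\}_n$ produces $R>0$ and $\{z_n^1\}_n \subset \Z^N$ with $\liminf_n \int_{B(z_n^1,R)} |v_n^1|^2 \, dx > 0$, which forces $|z_n^1|\to +\infty$, and $\wti v_n^1 := v_n^1(\cdot + z_n^1) \weakto w^1 \neq 0$ along a subsequence.

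\textbf{Step 2 ($w^1$ solves the periodic problem).} For any $\vp \in \cC_0^\infty(\R^N)$, evaluate $\cE'(u_n)\bigl(\vp(\cdot - z_n^1)\bigr)$ and change variables. Since $V_p$, $K$ and $f$ are $\Z^N$-periodic, the corresponding contributions converge to those of $\cE_{\per}'(w^1)(\vp)$. Because $V_l(y + z_n^1) \to 0$ pointwise on $\supp\vp$ and $|y+z_n^1|^{-1}\to 0$ there, a dominated-convergence argument (with the fractional Hardy inequality of Lemma~\ref{Hardy} to provide a uniform dominant for the singular term) shows these perturbations vanish in the limit; the nonlocal term is controlled via Lemma~\ref{weak-conv-D} applied to the translated sequence. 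Hence $\cE_{\per}'(w^1)(\vp) = 0$ for every test function, giving the first instance of (iii).

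\textbf{Step 3 (iteration and termination).} Set $v_n^2 := v_n^1 - w^1(\cdot - z_n^1)$. Then $v_n^2 \weakto 0$, and $v_n^2(\cdot + z_n^1) \weakto 0$ by construction. If $\|v_n^2\|_\mu \to 0$ we stop; otherwise we repeat Steps 1--2 to produce $(z_n^2, w^2)$. The orthogonality $|z_n^k - z_n^{k'}| \to \infty$ for $k \neq k'$ in (ii) follows because otherwise we could assume $z_n^k - z_n^{k'}$ convergent, contradicting $v_n^k(\cdot + z_n^{k-1}) \weakto 0$. Each nontrivial critical point $w^k$ of $\cE_{\per}$ satisfies a uniform lower bound $\|w^k\|_\mu \geq \rho > 0$ (obtained from condition (J1) in Lemma~\ref{lem_assumpt} applied to $\cE_{\per}$, since any nontrivial critical point lies on the Nehari manifold and thus outside the ball of radius $r$). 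A Pythagorean identity for $\|\cdot\|_\mu^2$---whose cross terms vanish because $|z_n^k|\to\infty$, so that the integrals against $V_l u^2$ and $u^2/|x|$ disappear, while the purely periodic quadratic piece splits as in standard $H^{1/2}$ concentration-compactness---gives
\begin{equation*}
  \|u_n\|_\mu^2 \;=\; \|u_0\|_\mu^2 \;+\; \sum_{k=1}^{\ell_n} \|w^k\|_\mu^2 \;+\; o(1),
\end{equation*}
so boundedness of $\{u_n\}_n$ forces the procedure to terminate at some finite $\ell$, yielding (iv).

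\textbf{Step 4 (energy decomposition) and main obstacle.} Given (iv), the quadratic part of $\cE$ decomposes as in Step~3, Lemma~\ref{BrezisLiebDecomp} handles the nonlocal Choquard term $\cD$, and a standard Brezis--Lieb splitting (iterated, using $\Z^N$-periodicity of $K$) takes care of $\int K|u|^q\,dx$; summing these yields (v). The main delicate point is Step~2: one must show that the non-periodic and non-translation-invariant pieces of $\cE$---the Hardy term $-\tfrac{\mu}{2}\int u^2/|x|\,dx$ and the localized perturbation $\tfrac{1}{2}\int V_l\,u^2\,dx$---do not contribute to the equation satisfied by any escaping bump $w^k$. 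This is precisely what motivates the definition~\eqref{Eper} of $\cE_{\per}$, and it is made rigorous by combining the decay of $V_l$ (assumption~(V1)) with the sharp Hardy inequality of Lemma~\ref{Hardy} applied to dominate the translated Coulomb term, together with the nonlocal weak continuity provided by Lemma~\ref{weak-conv-D}.
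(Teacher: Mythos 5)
Your proposal follows the same concentration--compactness/profile-decomposition route as the paper, with the same ingredients (Corollary~\ref{weak-conv-E}, Lions-type vanishing, translated fractional Hardy, Lemma~\ref{BrezisLiebDecomp}, the uniform lower bound $\rho$ on nontrivial $\cE_{\per}$-critical points) and the same iterative bump extraction. The outline of Steps 1--4 is essentially the paper's Steps 1--6.

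There is, however, a real gap in the dichotomy you state at each stage: ``if $\|v_n^1\|_\mu \to 0$ we stop; otherwise a fractional Lions-type vanishing lemma produces $z_n^1$ with non-vanishing mass.'' Lions' lemma does not give this dichotomy directly. What it gives is: if $\sup_z \int_{B(z,1)} |v_n^1|^2 \, dx \to 0$, then $v_n^1 \to 0$ in $L^t(\R^N)$ for $2 < t < 2N/(N-1)$, and otherwise one can extract a bump. To pass from vanishing in $L^t$ to $\|v_n^1\|_\mu \to 0$ (which is what you need both to conclude $\ell=0$ and to claim that $\|v_n^1\|_\mu \not\to 0$ forces non-vanishing) you must use the Palais--Smale condition together with nonlinear estimates showing that, in the vanishing case,
\begin{gather*}
\cD'(u_n)(v_n^1) \to 0, \qquad \cD'(u_0)(v_n^1) \to 0, \qquad \int_{\R^N} K(x)|u_n|^{q-2} u_n v_n^1 \, dx \to 0,
\end{gather*}
so that the identity obtained by expanding $\cE'(u_n)(v_n^1) - \cE'(u_0)(v_n^1) = o(1)$ yields $\|v_n^1\|_\mu^2 \to 0$. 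Controlling the Choquard term here requires the Hardy--Littlewood--Sobolev plus Minkowski estimate of the type displayed in \eqref{D'_est}; this is the technical crux of the argument and the only place where the precise growth conditions in (N)/(F1) are genuinely used, so it cannot be omitted. The same issue recurs in your iterated Step~3, where a remainder $\xi_n$ must be shown to go to zero strongly under the hypothesis \eqref{it-1}; the paper spends most of Step~4 establishing exactly this. Once you fill in this estimate, the remainder of your argument is sound and matches the paper's proof.
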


\begin{proof}
\textbf{Step 1: } \textit{Up to a subsequence, $u_n \weakto u_0$ and $\cE'(u_0) = 0$.} \\
Since $\{u_n\}_n$ is bounded, $u_n \weakto u_0$ up to a subsequence. Taking into account that $\cE'(u_0) \to 0$ and Corollary \ref{weak-conv-E} we obtain $\cE'(u_0) = 0$. \\
\textbf{Step 2: } \textit{Let $v_n^1 := u_n - u_0$. Suppose that}
\[
\lim_{n \to +\infty} \sup_{z \in \R^N} \int_{B(z,1)} |v_n^1(x)|^2 \, dx = 0.
\]
\textit{Then $u_n \to u_0$ and the statement follows with $\ell = 0$.} \\
Note that
\begin{equation*}
\cE'(u_n)(v_n^1) = \|v_n^1\|^2_\mu + \langle u_0, u_n-u_0 \rangle_\mu  - \frac12 \cD'(u_n)(v_n^1) + \int_{\R^N}K(x) |u_n|^{q-2} u_n v_n^1 \, dx.
\end{equation*}
Taking into account that
\[
\langle u_0, u_n - u_0 \rangle_\mu -  \frac12 \cD'(u_0)(v_n^1) + \int_{\R^N}K(x) |u_0|^{q-2} u_0 v_n^1 \, dx = 0
\]
we have
\begin{align*}
\|v_n^1\|^2_\mu &= \cE'(u_n)(v_n^1) - \langle u_0, u_n-u_0 \rangle_\mu + \frac12 \cD'(u_n)(v_n^1) - \int_{\R^N} K(x) |u_n|^{q-2} u_n v_n^1 \, dx \\
&= \cE'(u_n)(v_n^1) - \frac12 \cD'(u_0)(v_n^1) + \int_{\R^N}K(x) |u_0|^{q-2} u_0 v_n^1 \, dx  \\
&\quad +  \frac12 \cD'(u_n)(v_n^1) - \int_{\R^N} K(x) |u_n|^{q-2} u_n v_n^1 \, dx.
\end{align*}
From the boundedness of $\{ v_n^1\}_n$ there follows that
\[
\cE'(u_n)(v_n^1) \to 0.
\]
From H\"older's inequality and Lions' Concentration-Compactness Principle \cite{L} we obtain
\begin{align*}
\left| \int_{\R^N}K(x) |u_0|^{q-2} u_0 v_n^1 \, dx \right| \leq |K|_\infty |u_0|^{q-1}_q |v_n^1|_q \to 0, \\
\left| \int_{\R^N}K(x) |u_n|^{q-2} u_n v_n^1 \, dx \right| \leq |K|_\infty |u_n|^{q-1}_q |v_n^1|_q \to 0.
\end{align*}
Moreover, from the Hardy-Littlewood-Sobolev inequality there follows that
\[
\cD'(u_n)(v_n^1) = \cD'(u_n)(u_n - u_0) \to 0 \quad \mbox{and} \quad \cD'(u_0)(v_n^1) = \cD'(u_0)(u_n - u_0) \to 0.
\]
Thus $\|v_n^1\|^2_\mu \to 0$ and $u_n \to u_0$. Hence $\cE(u_n) \to \cE(u_0)$ and the statement of the theorem holds true for $\ell = 0$. \\
\textbf{Step 3: } \textit{Suppose that there is a sequence $\{z_n\}_n \subset \Z^N$ such that}
\[
\liminf_{n\to+\infty} \int_{B(z_n, 1+\sqrt{N})} |v_n^1|^2 \, dx > 0.
\]
\textit{Then there is $w \in H^{1/2} (\R^N)$ such that (up to a subsequence):
\begin{itemize}
\item[(i)] $|z_n| \to +\infty$; 
\item[(ii)] $u_n (\cdot + z_n) \weakto w \neq 0$;
\item[(iii)] $\cE_{\per}' (w) = 0$.
\end{itemize}
}
Statements (i) and (ii) are standard, so we will show only (iii). Put $v_n := u_n (\cdot + z_n)$. Similarly as in Step 1 we see that
\[
\cE_{\per}' (v_n) (\varphi) \to \cE_{\per}' (w) (\varphi)
\]
for any $\varphi \in \cC_0^\infty (\R^N)$. Moreover
\begin{align*}
o(1) &= \cE'(u_n)(\varphi (\cdot - z_n)) = \cE_{\per}'(v_n)(\varphi) + \int_{\R^N} V_l (x+z_n) v_n \varphi \, dx - \mu \int_{\R^N} \frac{u_n \varphi(\cdot - z_n)}{|x|} \, dx \\
&= \cE_{\per}' (w) (\varphi) + \int_{\supp \varphi} V_l (x+z_n) v_n \varphi \, dx - \mu \int_{\R^N} \frac{u_n \varphi(\cdot - z_n)}{|x|} \, dx + o(1)
\end{align*}
and
\[
\cE_{\per}' (w) (\varphi) = - \int_{\supp \varphi} V_l (x+z_n) v_n \varphi \, dx + \mu \int_{\R^N} \frac{u_n \varphi(\cdot - z_n)}{|x|} \, dx + o(1).
\]
Lemma \ref{Hardy} implies that $\{ u_n \}_n$ is bounded in $L^2 (\R^N; |x|^{-1} dx)$, and from \cite[Lemma 2.5]{B} we obtain that
\[
\left| \int_{\R^N} \frac{u_n \varphi(\cdot - z_n)}{|x|} \, dx \right| \leq \left( \int_{\R^N} \frac{|u_n|^2}{|x|} \, dx \right)^{1/2} \left( \int_{\R^N} \frac{|\varphi(x - z_n)|^2}{|x|} \, dx \right)^{1/2} \to 0.
\]
Hence it is sufficient to show that $\int_{\supp \varphi} V_l (x+z_n) v_n \varphi \, dx \to 0$. Fix any measurable set $E \subset \supp \varphi$. From the H\"older inequality we obtain that
\[
\int_E |V_l (x+z_n) v_n \varphi| \, dx \leq |V_l|_\infty | v_n|_2 |\varphi \chi_E|_2.
\]
Then, the boundedness of $\{v_n\}_n$ in $L^2 (\R^N)$ implies that the family $\{ V_l (\cdot +z_n) v_n \varphi \}_n$ is uniformly integrable on $\supp \varphi$ and from Vitali's convergence theorem we derive
\[
\int_{\supp \varphi} V_l (x+z_n) v_n \varphi \, dx \to 0,
\]
and the proof of Step 3 is completed. \\
\textbf{Step 4:} \emph{Suppose that there are $m \geq 1$, $\{ z_n^k \}_n \subset \Z^N$, $w^k \in H^{1/2}(\R^N)$ for $k \in \{1, 2, \ldots, m \}$ such that}
\begin{align*}
|z_n^k| \to +\infty, \ |z_n^k - z_n^{k'}| \to +\infty &\mbox{ for } 1 \leq k < k' \leq m; \\
u_n(\cdot + z_n^k) \to w^k \neq 0 &\mbox{ for } 1 \leq k \leq m; \\
\cE_{\per}' (w^k) = 0 &\mbox{ for } 1 \leq k \leq m.
\end{align*}
\emph{Then}
\begin{itemize}
\item[(1)] if 
\begin{equation}\label{it-1}
\sup_{z \in \R^N} \int_{B(z, 1)} \left| u_n - u_0 - \sum_{k=1}^m w^k(\cdot - z_n^k) \right|^2 \, dx \to 0 \mbox{ as } n \to +\infty
\end{equation}
then
\[
u_n - u_0 - \sum_{k=1}^m w^k(\cdot - z_n^k)  \to 0;
\]
\item[(2)] if there is $\{z_n^{m+1} \}_n \subset \Z^N$ such that
\begin{equation}\label{it-2}
\liminf_{n\to+\infty} \int_{B(z_n^{m+1}, 1+\sqrt{N})} \left| u_n - u_0 - \sum_{k=1}^m w^k(\cdot - z_n^k) \right|^2 \, dx > 0
\end{equation}
then there is $w^{m+1} \in H^{1/2} (\R^N)$ such that (up to subsequences)
\begin{itemize}
\item[(i)] $|z_n^{m+1}| \to +\infty$, $|z_n^{m+1} - z_n^k| \to +\infty$ for $1 \leq k \leq m$,
\item[(ii)] $u_n (\cdot - z_n^{m+1}) \weakto w^{m+1} \neq 0$,
\item[(iii)] $\cE_{\per}'(w^{m+1}) = 0$.
\end{itemize}
\end{itemize}
Define
\begin{equation*}
\xi_n := u_n - u_0 - \sum_{k=1}^m w^k(\cdot - z_n^k).
\end{equation*}
Suppose that \eqref{it-1} holds. Then Lions' Concentration-Compactness
Principle implies that $\xi_n \to 0$ in $L^t (\R^N)$ for any
$2 < t < \frac{2N}{N-1}$. Repeating the argument of \cite[Lemma 4.3,
Step 4]{BS} and using the identity $\cE'(u_0)(\xi_n) = 0$, we obtain
that
\begin{align*}
  \|\xi_n\|^2_\mu &= -\langle u_0, \xi_n \rangle_\mu - \sum_{k=1}^m \langle w^k (\cdot - z_n^k), \xi_n \rangle_\mu \\
                  &\quad +\frac12 \cD'(u_n)(\xi_n) - \int_{\R^N} K(x) |u_n|^{q-2} u_n \xi_n \, dx + o(1) \\
                  &= - \frac12 \cD'(u_0)(\xi_n) + \int_{\R^N} K(x) |u_0|^{q-2} u_0 \xi_n \, dx - \sum_{k=1}^m \langle w^k (\cdot - z_n^k), \xi_n \rangle_\mu \\ &\quad + \frac12\cD'(u_n)(\xi_n) - \int_{\R^N} K(x) |u_n|^{q-2} u_n \xi_n \, dx + o(1).
\end{align*}
Taking into account that each $w^k$ is a critical point of
$\cE_{\per}$, we obtain that
\begin{align*}
  \|\xi_n\|^2_\mu &= \frac12 \cD'(u_n)(\xi_n) - \frac12 \cD'(u_0)(\xi_n) - \frac12 \sum_{k=1}^m \cD'(w^k (\cdot - z_n^k))(\xi_n) \\
                  &\quad - \int_{\R^N} K(x) \left( |u_n|^{q-2} u_n - |u_0|^{q-2} u_0 - \sum_{k=1}^m |w^k (x - z_n^k)|^{q-2} w^k (x - z_n^k) \right) \xi_n \, dx \\
                  &\quad - \sum_{k=1}^m \int_{\R^N} V_l (x) w^k (x - z_n^k) \xi_n \, dx + \mu \sum_{k=1}^m \int_{\R^N} \frac{w^k(x - z_n^k) \xi_n}{|x|} \, dx + o(1).
\end{align*}
From \cite[Lemma 2.5]{B}, we see that
\[
  \int_{\R^N} \frac{w^k(x - z_n^k) \xi_n}{|x|} \, dx \to 0.
\]
Similarly as in \cite[Lemma 4.3, Step 4]{BS} we get that
\[
  \int_{\R^N} K(x) \left( |u_n|^{q-2} u_n - |u_0|^{q-2} u_0 -
    \sum_{k=1}^m |w^k (x - z_n^k)|^{q-2} w^k (x - z_n^k) \right) \xi_n
  \, dx \to 0
\]
and 
\[
  \int_{\R^N} V_l (x) w^k (x - z_n^k) \xi_n \, dx \to 0.
\]
Hence
\[
  \|\xi_n\|^2 = \frac12 \cD'(u_n)(\xi_n) - \frac12 \cD'(u_0)(\xi_n) - \frac12 \sum_{k=1}^m
  \cD'(w^k (\cdot - z_n^k))(\xi_n) + o(1).
\]
To show that $|\cD'(u_n)(\xi_n)| \to 0$ we note that $(F1)$,
\eqref{F-eps}, Hardy-Littlewood-Sobolev and Minkowski inequalities
imply that
\begin{align}
\label{D'_est}
  |\cD'(u_n)(\xi_n)| &\leq 2\int_{\R^N \times \R^N} \dfrac{|F(x,u_n(x))||f(y,u_n(y))||\xi_n(y)|}{|x-y|^{N-\al}} \, dx \, dy \notag\\
                     &\leq C\int_{\R^N \times \R^N} \dfrac{\left(\eps|u_n|^2 + C_{\eps}|u_n|^p\right)\left(|u_n|^{\frac{\al}{N}} + |u_n|^{p-1} \right)|\xi_n|}{|x-y|^{N-\al}} \, dx \, dy \notag \\
                     &\leq C \left|\eps|u_n|^2 + C_{\eps}|u_n|^p\right|_r\left|\left(|u_n|^{\frac{\al}{N}} + |u_n|^{p-1} \right)|\xi_n|\right|_r \notag\\
                     &\leq C \left(|u_n|^2_{2r} + |u_n|^p_{pr} \right)\left(|u_n|^{\al/N}_{r \alpha / N} + |u_n|^{p-1}_{(p-1)r}\right)|\xi_n|_r \to 0,
\end{align}
where $r=\frac{2N}{N+\al} < 2N/(N-1)$. Similarly
$\cD'(u_0)(\xi_n) \to 0$ and $\cD'(w^k (\cdot - z_n^k))(\xi_n) \to
0$. Hence $\xi_n \to 0$ and the proof of Step 4 in this case is
completed. Hence, assume that for some $\{z_n^{m+1} \}_n \subset \Z^N$
we have \eqref{it-2}. (i) and (ii) are standard. Put
$v_n := u_n(\cdot + z_n^{m+1})$. As in Step 3 we see that
\begin{equation*}
\cE_{\per}' (v_n)(\varphi) - \cE_{\per}' (w^{m+1})(\varphi) \to 0
\end{equation*}
and $\cE_{\per}' (v_n)(\varphi) \to 0$ for any $\varphi \in \cC_0^\infty (\R^N)$. \\
\textbf{Step 5:} \textit{Conclusion.} Iterating Step 4 we construct
functions $w^k \neq 0$ and sequences $\{z_n^k\}_n$. Since $w^m$ are
critical points of $\cE_{\per}$, there is $\rho > 0$ such that
$\|w^k\| \geq \rho$.  Properties of the weak convergence yield
\begin{align*}
0 &\leq \lim_{n \to +\infty} \left\| u_n - u_0 - \sum_{k=1}^\ell w^k (\cdot - z_n^k) \right\|^2 \\
&= \lim_{n\to +\infty} \left( \| u_n \|^2 - \|u_0\|^2 - \sum_{k=1}^m \|w^k\|^2 \right) \leq \limsup_{n \to +\infty} \|u_n\|^2 - \|u_0\|^2 - m \rho^2.
\end{align*}
Hence
\begin{equation*}
  \rho^2 m \leq \limsup_{n \to+\infty} \|u_n\|^2 - \|u_0\|^2
\end{equation*}
and the procedure will finish after finite number --- say \(\ell\) --- of steps. \\
\textbf{Step 6:} \textit{(v) holds.} (v) follows from properties of the weak convergence, Hardy inequality and Lemma \ref{BrezisLiebDecomp}. Indeed,
\begin{align*}
	\cE(u_n) &= \dfrac{1}{2}\langle u_n,u_n\rangle_{\mu} -  \frac12 \cD(u_n) + \dfrac{1}{q}\int_{\R^N}K(x)|u_n(x)|^q \, dx \\
	& = \dfrac{1}{2}\langle u_0,u_0\rangle_{\mu} + \dfrac{1}{2}\langle u_n - u_0,u_n-u_0\rangle_{\mu} + \langle u_0,u_n-u_0\rangle_{\mu} - \frac12 \cD(u_n) \\
	& \quad + \dfrac{1}{q}\int_{\R^N}K(x)|u_n(x)|^q \, dx \\	
	& = \cE(u_0) + \cE_{\per}(u_n-u_0) + \langle u_0,u_n-u_0\rangle_{\mu} + \frac12 \cD(u_n-u_0) - \frac12 \cD(u_n) + \frac12 \cD(u_0) \\
	& \quad - \dfrac{1}{q}\int_{\R^N} K(x)\left[|u_n-u_0|^q + |u_0|^q - |u_n|^q \right] \, dx + \dfrac{1}{2}\int_{\R^N} V_l(x)(u_n-u_0)^2 \, dx \\
	& \quad - \dfrac{\mu}{2}\int_{\R^N} \dfrac{(u_n-u_0)^2}{|x|} \, dx.
\end{align*}
By the weak convergence we obtain 
\begin{equation*}
\langle u_0,u_n-u_0\rangle_{\mu} \to 0.
\end{equation*}
Lemma \ref{Brezis-Lieb-D} imply that 
\begin{equation*}
\cD(u_n-u_0) - \cD(u_n) + \cD(u_0) \to 0.
\end{equation*}
By a classical Brezis-Lieb lemma argument (see \cite[Proposition 4.7.30]{BO}) we have also that
\begin{equation*}
\int_{\R^N} K(x)\left[|u_n-u_0|^q + |u_0|^q - |u_n|^q \right] \, dx \to 0.
\end{equation*}
Let $E \subset \R^N$ a measurable set, by (V1) and H\"older inequality we have
\[
	\int_E |V_l(x)||u_n-u_0|^2 dx \leq |V_l\chi_E|_N|u_n-u_0|^2_{\frac{2N}{N-1}}
\]
and, since $(u_n-u_0)_n$ is bounded in $H^{1/2}(\R^N)$, by Vitali convergence theorem 
\begin{equation*}
\int_{\R^N} V_l(x)(u_n-u_0)^2 \, dx \to 0.
\end{equation*}
We note that
\begin{equation*}
 \int_{\R^N} \dfrac{(u_n-u_0)^2}{|x|} \, dx 
  =  \int_{\R^N} \dfrac{(u_n-u_0) u _n}{|x|} \, dx -  \int_{\R^N} \dfrac{(u_n-u_0) u_0}{|x|} \, dx 
\end{equation*}
and, as before, $\int_{\R^N} \frac{(u_n-u_0) u_0}{|x|} \, dx \to 0$. Moreover
\begin{align*}
\int_{\R^N} \dfrac{(u_n-u_0) u _n}{|x|} \, dx = \int_{\R^N} \dfrac{\left( u_n-u_0 - \sum_{k=1}^\ell w^k (\cdot - z_n^k) \right) u _n}{|x|} + \sum_{k=1}^\ell  \dfrac{  w^k (\cdot - z_n^k) u _n}{|x|} \, dx.
\end{align*}
Then
\begin{align*}
\left| \int_{\R^N} \dfrac{  w^k (\cdot - z_n^k) u _n}{|x|} \, dx\right|  \leq \left( \int_{\R^N} \dfrac{  |w^k (\cdot - z_n^k)|^2 }{|x|} \, dx \right)^{1/2} \left( \int_{\R^N} \dfrac{  |u_n|^2 }{|x|} \, dx \right)^{1/2} \to 0
\end{align*}
and
\begin{multline*}
\left| \int_{\R^N} \dfrac{\left( u_n-u_0 - \sum_{k=1}^\ell w^k (\cdot - z_n^k) \right) u _n}{|x|} \, dx \right| \\ \leq 
\left( \int_{\R^N} \dfrac{\left( u_n-u_0 - \sum_{k=1}^\ell w^k (\cdot - z_n^k) \right)^2}{|x|} \, dx \right)^2  \left( \int_{\R^N} \dfrac{  |u_n|^2 }{|x|} \, dx \right)^{1/2} \\ \leq  C \left\| u_n-u_0 - \sum_{k=1}^\ell w^k (\cdot - z_n^k)  \right\| \left( \int_{\R^N} \dfrac{  |u_n|^2 }{|x|} \, dx \right)^{1/2} \to 0.
\end{multline*}
Thus
\begin{equation*}
\cE(u_n) = \cE(u_0) + \cE_{\per}(u_n-u_0) + o(1)
\end{equation*}
and it is sufficient to show that $\cE_{\per}(u_n-u_0) \to \sum_{k=1}^\ell \cE_{\per}(w^k)$.

Now, we compute
\begin{align*}
	\cE_{\per}(u_n-u_0) &= \dfrac{1}{2}\|u_n-u_0\|^2_{\mu} - \frac12 \cD(u_n-u_0) + \dfrac{1}{q}\int_{\R^N} K(x)|u_n-u_0|^q \, dx \\
	& \quad - \dfrac{1}{2}\int_{\R^N} V_l(x)(u_n-u_0)^2 \, dx + \dfrac{\mu}{2}\int_{\R^N} \dfrac{(u_n-u_0)^2}{|x|} \, dx \\
	& = \dfrac{1}{2}\left\|u_n-u_0 - \sum_{k=1}^\ell w^k(\cdot - z_n^k)\right\|^2_{\mu} - \frac12 \cD(u_n-u_0) \\ &\quad + \dfrac{1}{q}\int_{\R^N} K(x)|u_n-u_0|^q \, dx 
	+ \dfrac{1}{2}\sum_{k=1}^\ell \|w^k(\cdot - z_n^k)\|_{\mu} + o(1)\\
	&= \sum_{k=1}^\ell \cE_{\per}(w^k)  + \frac12 \sum_{k=1}^\ell \cD(w^k(\cdot - z_n^k)) - \dfrac{1}{q}\int_{\R^N} K(x)|w^k(\cdot - z_n^k)|^q \, dx\\
	& \quad - \frac12 \cD(u_n-u_0) + \dfrac{1}{q}\int_{\R^N} K(x)|u_n-u_0|^q \, dx + o(1).
\end{align*}

Iterating Lemma \ref{Brezis-Lieb-D} and by Lemma \ref{BrezisLiebDecomp} we obtain
\[
	\cD(u_n-u_0) - \sum_{k=1}^\ell \cD(w^k(\cdot - z_n^k)) \to 0
\]
as $n \to +\infty$ and similarly we can prove
\[
	\int_{\R^N} K(x)|u_n-u_0|^q \, dx - \int_{\R^N} K(x)|w^k(\cdot - z_n^k)|^q \, dx \to 0
\]
as $n \to +\infty$; hence
\[
	\cE_{\per}(u_n-u_0) \to \sum_{k=1}^\ell \cE_{\per}(w^k)
\]
and the proof is complete.
\end{proof}

\section{Existence and nonexistence of ground states}

The remaining part of the proof of the existence of solutions is similar to the proof of \cite[Theorem 1.1]{BS} and \cite[Theorem 1.1]{B}, and we include it here for the reader's convenience.

\begin{proof}[Proof of Theorem \ref{ThMain1}]
Put $c_{\mathrm{per}} := \inf_{\cN_{\mathrm{per}}} \cE_{\mathrm{per}}$, where $\cE_{\mathrm{per}}$ is given by \eqref{Eper} and $\cN_{\mathrm{per}}$ is the corresponding Nehari manifold. From Theorem \ref{ThSplitting}(iii), (v) there follows that
$$
c = \lim_{n \to +\infty} \cE(u_n) = \cE(u_0) + \sum_{k=1}^\ell \cE_{\mathrm{per}} (w^k) \geq \cE(u_0) + \ell c_{\mathrm{per}}.
$$
Note that \eqref{a1} implies that $V(x) - \frac{\mu}{|x|} = V_p (x) + V_l (x) - \frac{\mu}{|x|} < V_p (x)$ for a.e. $x \in \R^N$, which gives the inequality $c_{\mathrm{per}} > c > 0$. Assume by contradiction that $u_0 = 0$. Then 
$$
c = \cE(u_0) + \sum_{k=1}^\ell \cE_{\mathrm{per}} (w^k) = \sum_{k=1}^\ell \cE_{\mathrm{per}} (w^k) \geq \ell c_{\mathrm{per}}.
$$
If $\ell \geq 1$ we obtain $c \geq \ell c_{\mathrm{per}} > \ell c$, which is a contradiction. Hence $\ell = 0$ and
$$
0 < c = \cE(u_0) = 0,
$$
a contradiction. Thus $u_0$ is a ground state solution.
\end{proof}

\begin{proof}[Proof of Theorem \ref{ThMain2}]
We assume by contradiction that $u_0$ is a ground state for $\cE$. In particular
$$
c = \inf_{\cN} \cE = \cE(u_0) > 0.
$$
The inequality \eqref{a2} implies that
$$
V(x) - \frac{\mu}{|x|} = V_p (x) + V_l (x) - \frac{\mu}{|x|} > V_p (x) \mbox{ for a.e. } x \in \R^N
$$
and therefore $c > c_{\per}$. On the other hand, fix $u \in \cN_{\per}$, where $\cE_{\mathrm{per}}$ is given by \eqref{Eper} with the corresponding Nehari manifold $\cN_{\per}$. We may choose $t_z > 0$ such that $t_z u(\cdot - z) \in \cN$ for any $z \in \Z^N$. Then
\begin{multline*}
\cE_{\per} (u) = \cE_{\per} (u (\cdot - z) ) \geq \cE_{\per} (t_z u(\cdot - z)) \\
=\cE (t_z u(\cdot - z)) - \frac12 \int_{\R^N} V_l(x) | t_z u(\cdot - z)|^2 \, dx + \frac{\mu}{2} \int_{\R^N} \frac{|t_z u(\cdot - z)|^2}{|x|} \, dx \\
\geq c - \frac12 \int_{\R^N} V_l(x) | t_z u(\cdot - z)|^2 \, dx + \frac{\mu}{2} \int_{\R^N} \frac{|t_z u(\cdot - z)|^2}{|x|} \, dx.
\end{multline*}
Note that coercivity of $\cE_{\per}$ on $\cN_{\per}$ and the inequality $\cE_{\per} (t_n (u(\cdot - z)) = \cE_{\per} (t_z u) \leq c_{\per}$ implies that $\sup_{z \in \Z^N} t_z < +\infty$. Hence
$$
\int_{\R^N} V_l(x) | t_z u(\cdot - z)|^2 \, dx = t_z^2 \int_{\R^N} V_l(x + z) u^2 \, dx \to 0\mbox{ as } |z| \to +\infty.
$$
From \cite[Lemma 2.5]{B} there follows that
$$
\int_{\R^N} \frac{|t_z u(\cdot - z)|^2}{|x|} \, dx = t_z^2 \int_{\R^N} \frac{| u(\cdot - z)|^2}{|x|} \, dx \to 0\mbox{ as } |z| \to +\infty.
$$
Hence
$$
\cE_{\per} (u) \geq c + o(1)
$$
and taking infimum over $u \in \cN_{\per}$ we see that
$$
c_{\per} = \inf_{\cN_{\per}} \cE_{\per} \geq c,
$$
which is a contradiction.
\end{proof}

\section{Compactness of ground states}

	Let $\{\mu_n\}_n \in (0,\mu^*)$ be a sequence such that $\mu_n \to 0^+$ as $n \to +\infty$ and let $\cE_n$ be the Euler functional for $\mu=\mu_n$.
We denote by $\cE_0$ and $\cN_0$ the Euler functional and the Nehari manifold for $\mu=0$ and define
\[
	c_n:=\cE_n(u_n)=\inf_{\cN_n}\cE_n, \quad c_0:=\cE_0(u_0)=\inf_{\cN_0}\cE_0,
\]
where $u_n \in \cN_n$ is the ground state solution for $\cE_n$, in particular $\cE_n ' (u_n) = 0$.

\begin{Lem}
\label{Pos_inf}
There exists  a positive radius $r>0$ such that 
\[
	\inf\limits_{n \geq 1} \inf\limits_{\|u\|_{\mu_n}=r}\cE_n(u) > 0.
\]
\end{Lem}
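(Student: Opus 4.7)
The plan is to repeat verbatim the argument used to verify assumption (J1) in Lemma \ref{lem_assumpt}, while tracking that every implicit constant can be chosen independently of $n$. The key uniformity comes essentially for free from $\mu_n \to 0^+$: since $\{\mu_n\}_n \subset (0,\mu^*)$ converges to $0$, its tail lies in some interval $[0,\mu^{**}]$ with $\mu^{**}<\mu^*$, so by Lemma \ref{norm-equiv} I can produce constants $0 < c_1 \leq c_2$, independent of $n$, such that
\[
c_1 \left([u]^2 + |u|_2^2\right) \leq \|u\|_{\mu_n}^2 \leq c_2 \left([u]^2 + |u|_2^2\right)
\]
for every $u \in H^{1/2}(\R^N)$ and every $n \geq 1$. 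In particular the norms $\|\cdot\|_{\mu_n}$ are all equivalent to the standard Gagliardo norm on $H^{1/2}(\R^N)$ with constants uniform in $n$.

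Next I would revisit the proof of Lemma \ref{D_estimate} and note that it rests only on the Hardy--Littlewood--Sobolev inequality together with the Sobolev embeddings $H^{1/2}(\R^N) \hookrightarrow L^{2r}(\R^N) \cap L^{pr}(\R^N)$ (valid by (N)). Thanks to the uniform norm equivalence recorded above, the embedding constants expressed with respect to $\|\cdot\|_{\mu_n}$ can be chosen independent of $n$, so a direct rerun of that proof yields a single constant $C > 0$, independent of $n$, such that
\[
\cD(u) \leq C\left(\|u\|_{\mu_n}^4 + \|u\|_{\mu_n}^{p+2} + \|u\|_{\mu_n}^{2p}\right) \quad \text{for all } u \in H^{1/2}(\R^N), \ n \geq 1.
\]

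Using $K \geq 0$ to drop the positive $L^q$-term in $\cE_n$, I would then conclude
\[
\cE_n(u) \geq \frac{1}{2}\|u\|_{\mu_n}^2 - \frac{1}{2}\cD(u) \geq \frac{\|u\|_{\mu_n}^2}{2}\left(1 - C\bigl(\|u\|_{\mu_n}^2 + \|u\|_{\mu_n}^p + \|u\|_{\mu_n}^{2p-2}\bigr)\right).
\]
Since $p > 2$, the map $r \mapsto C(r^2 + r^p + r^{2p-2})$ is continuous on $[0,+\infty)$, vanishes at $r = 0$, and grows to $+\infty$; I therefore fix $r > 0$, independent of $n$, so small that $C(r^2 + r^p + r^{2p-2}) \leq 1/2$. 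For that radius I would obtain
\[
\inf_{\|u\|_{\mu_n} = r} \cE_n(u) \geq \frac{r^2}{4} > 0 \quad \text{for every } n \geq 1,
\]
and taking the infimum over $n$ gives the claim.

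The only genuine obstacle is the bookkeeping of uniformity in $n$ of the constants stemming from Lemma \ref{norm-equiv} and the Sobolev embeddings; once that is settled, no new structural ingredient is needed beyond the computation already carried out in the verification of (J1).
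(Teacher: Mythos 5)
Your proposal follows the paper's argument exactly: both proofs simply rerun the verification of (J1) from Lemma \ref{lem_assumpt} for the functional $\cE_n$. You are in fact more careful than the paper, which asserts the bound holds ``for every $n\geq 1$, and properly chosen $r>0$'' without spelling out the uniformity; your observation that $\sup_n \mu_n < \mu^*$ gives a uniform lower bound in the norm equivalence of Lemma \ref{norm-equiv} is exactly the missing bookkeeping that justifies choosing $r$ independent of $n$.
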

\begin{proof}
Fix $n\geq 1$.
Repeating the same computations as in Lemma \ref{lem_assumpt} (J1) we obtain that
\[
	\cE_n(u)=\frac{1}{2}\|u\|^2_{\mu_n} - \frac12 \cD(u) + \frac{1}{q} \int_{\R^N} K(x) |u(x)|^q \, dx \geq \frac{1}{4}\|u\|^2_{\mu_n} = \frac{1}{4}r^2 > 0
\]
for $\|u\|_{\mu_n}=r$ and for every $n \geq 1$, and properly chosen $r > 0$.
\end{proof}

\begin{Lem}
\label{ground_bounded}
The sequence $\{u_n\}_n$ is bounded in $H^{1/2}(\R^N)$.
\end{Lem}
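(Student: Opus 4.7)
The plan is to combine the Ambrosetti--Rabinowitz-type identity used in Lemma~\ref{Cer_bdd} with a uniform upper bound for the ground state energies $c_n$, and then to transfer the estimate back to the standard $H^{1/2}$-norm.

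First, since each $u_n$ is a critical point of $\cE_n$, we have $\cE_n'(u_n)(u_n)=0$; repeating verbatim the computation in the proof of Lemma~\ref{Cer_bdd} (which uses only \eqref{AR-q} and $F\ge 0$) gives
\[
c_n \;=\; \cE_n(u_n) - \tfrac{1}{q}\cE_n'(u_n)(u_n) \;\geq\; \left(\tfrac{1}{2}-\tfrac{1}{q}\right)\|u_n\|_{\mu_n}^2.
\]
To bound $c_n$ uniformly from above I would invoke the min--max characterisation $c_n = \inf_{u\neq 0}\sup_{t\geq 0}\cE_n(tu)$ established in the abstract theorem of Section~3. Since the $\mu$-dependence of $\cE_n$ enters only through the Hardy term, for every $u\neq 0$ and every $t\geq 0$,
\[
\cE_n(tu) \;=\; \cE_0(tu) \;-\; \tfrac{\mu_n t^2}{2}\int_{\R^N}\tfrac{|u(x)|^2}{|x|}\,dx \;\leq\; \cE_0(tu),
\]
so $\sup_{t\geq 0}\cE_n(tu)\leq \sup_{t\geq 0}\cE_0(tu)$, and taking the infimum over $u\neq 0$ yields $c_n\leq c_0$. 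Combined with the previous step this gives
\[
\|u_n\|_{\mu_n}^2 \;\leq\; \frac{2q}{q-2}\,c_0.
\]

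Finally, I would pass from $\|\cdot\|_{\mu_n}$ to the standard $H^{1/2}$-norm via Lemma~\ref{norm-equiv}. The lower equivalence constant there is $\min\{\tfrac12 C(N,\tfrac12) - \mu_n/C_{N,1/2,1/2},\ \essinf V - m\}$, which depends on $\mu_n$. Since $\mu_n\to 0^+$, there is $n_0$ with $\mu_n \leq \mu^*/2$ for $n\geq n_0$, and for such $n$ this constant is bounded below by a positive quantity depending only on $N$, $m$ and $\essinf V$; the finitely many earlier terms are bounded individually. Hence $\{u_n\}_n$ is bounded in $H^{1/2}(\R^N)$.

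The only mildly delicate point is ensuring the uniformity of the norm equivalence along the sequence, which could in principle fail if $\mu_n$ accumulated near the critical threshold $\mu^*$; the assumption $\mu_n\to 0^+$ rules this out immediately.
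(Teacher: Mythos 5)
Your proof is correct and follows essentially the same strategy as the paper's: use the Ambrosetti--Rabinowitz-type identity $c_n=\cE_n(u_n)-\tfrac1q\cE_n'(u_n)(u_n)\geq(\tfrac12-\tfrac1q)\|u_n\|_{\mu_n}^2$, bound $c_n$ from above uniformly, and then transfer to the standard $H^{1/2}$-norm via Lemma~\ref{norm-equiv} noting that the equivalence constant stays bounded away from zero since $\mu_n\to 0^+$.

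One small remark in your favour: the paper's own argument invokes $c_0=\lim_n\cE_n(u_n)$ at the start of the display, but that identity is only proved later, in Lemma~\ref{inf_lim}, whose proof in turn cites Lemma~\ref{ground_bounded}; only the one-sided inequality $c_n\leq c_0$ (which is what the argument really needs, and which in Lemma~\ref{inf_lim} is derived via a Nehari projection $s_nu_0\in\cN_n$ without any boundedness assumption) is available at this stage. Your derivation of $c_n\leq c_0$ directly from the min--max characterization $c_n=\inf_{u\neq 0}\sup_{t\geq 0}\cE_n(tu)$ together with the pointwise inequality $\cE_n(tu)\leq\cE_0(tu)$ sidesteps this ordering issue cleanly, so your write-up is in fact a slight tightening of the paper's.
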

\begin{proof}
Suppose by contradiction that $Q(u_n) \to +\infty$. Then, from Lemma \ref{norm-equiv}, we see that
\begin{align*}
Q_{\mu_n} (u_n) \geq \min \left\{ \frac12 C \left( N, \frac12 \right) - \frac{\mu_n}{C_{N,1/2,1/2}}, \essinf_{\R^N} V - m \right\} \left( [u_n]^2 + |u_n|_2^2 \right) \geq C Q(u_n),
\end{align*}
where $C > 0$ does not depend on $\mu_n$. Hence $\| u_n \|_{\mu_n} \to +\infty$. Thus
\begin{align*}
	c_0 &= \lim_{n \to +\infty}\cE_n(u_n) = \lim_{n \to +\infty}\left(\cE_n(u_n) - \dfrac{1}{q}\cE'(u_n)(u_n)\right)\\
	&=\lim_{n \to +\infty}\left[\left(\dfrac{1}{2} - \dfrac{1}{q}\right)\|u_n\|^2_{\mu_n} + \dfrac{1}{q}\int_{\R^N \times \R^N} \dfrac{F(x,u_n(x))f(y,u_n(y))u_n(y)}{|x-y|^{N-\al}} \, dx \, dy \right.\\
	& \quad\left.- \frac12 \int_{\R^N \times \R^N} \dfrac{F(x,u_n(x))F(y,u_n(y))}{|x-y|^{N-\al}} \, dx \, dy\right]\\
	&=\lim_{n \to +\infty}\left[\left(\dfrac{1}{2} - \dfrac{1}{q}\right)\|u_n\|^2_{\mu_n} + \int_{\R^N \times \R^N}\dfrac{F(x,u_n(x))\vp(y,u_n(y))}{|x-y|^{N-\al}} \, dx \, dy \right]
\end{align*}
where $\vp(y,u_n(y))=\frac{1}{q}f(y,u_n(y))u_n(y)-\frac12 F(y,u_n(y)) \geq 0$ by \eqref{AR-q}.
Hence,
\[
	c_0 \geq \lim_{n \to +\infty}\left(\dfrac{1}{2} - \dfrac{1}{q}\right)\|u_n\|^2_{\mu_n} = +\infty
\]
and we obtain a contradiction.
\end{proof}

\begin{Lem}
\label{inf_lim}
There holds
\[
	c_0=\lim_{n \to +\infty}c_n.
\]
\end{Lem}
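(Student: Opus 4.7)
The plan is to establish the two inequalities $\limsup_{n\to+\infty} c_n \leq c_0$ and $\liminf_{n\to+\infty} c_n \geq c_0$ separately, exploiting the min-max characterization of ground-state energies in Theorem 3.1 together with the elementary fact that for $\mu_n > 0$ the functional $\cE_n$ is pointwise dominated by $\cE_0$.

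For the upper bound, I would observe that
\[
\cE_n(u) = \cE_0(u) - \frac{\mu_n}{2}\int_{\R^N}\frac{|u(x)|^2}{|x|}\,dx \leq \cE_0(u)
\]
for every $u \in H^{1/2}(\R^N)$. Consequently $\sup_{t \geq 0}\cE_n(tu) \leq \sup_{t\geq 0}\cE_0(tu)$ for every $u \neq 0$, and taking the infimum in the third characterization of Theorem 3.1 immediately yields $c_n \leq c_0$ for every $n$, whence $\limsup_{n\to+\infty} c_n \leq c_0$ with essentially no work.

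The reverse inequality requires a projection argument onto the unperturbed Nehari manifold. By Lemma \ref{lem_assumpt} applied with $\mu = 0$, for every $v \in H^{1/2}(\R^N) \setminus \{0\}$ there is a unique $s(v) > 0$ such that $s(v) v \in \cN_0$, and this value realizes $\max_{t \geq 0}\cE_0(tv)$. Set $s_n := s(u_n)$. Since $u_n \in \cN_n$ is itself the unique maximizer of $t \mapsto \cE_n(tu_n)$, one has $\cE_n(s_n u_n) \leq \cE_n(u_n) = c_n$, so
\[
c_0 \leq \cE_0(s_n u_n) = \cE_n(s_n u_n) + \frac{\mu_n s_n^2}{2}\int_{\R^N}\frac{|u_n(x)|^2}{|x|}\,dx \leq c_n + \frac{\mu_n s_n^2}{2}\int_{\R^N}\frac{|u_n(x)|^2}{|x|}\,dx.
\]
By Lemma \ref{ground_bounded}, $\{u_n\}$ is bounded in $H^{1/2}(\R^N)$, so by the Hardy inequality of Lemma \ref{Hardy} the integral $\int_{\R^N}\frac{|u_n|^2}{|x|}\,dx$ is bounded. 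Since $\mu_n \to 0^+$, the proof will be complete once the scalars $\{s_n\}$ are shown to lie in a compact subset of $(0,+\infty)$.

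The only step that needs real work is this control of $\{s_n\}$. If $s_n \to +\infty$ along a subsequence, then by assumption (F3) and a Fatou argument identical to the verification of (J2) in Lemma \ref{lem_assumpt}, one obtains $\cD(s_n u_n)/s_n^q \to +\infty$, while the remaining terms of $\cE_0(s_n u_n)$ are controlled by $s_n^2$ and $s_n^q$; hence $\cE_0(s_n u_n) \to -\infty$, contradicting $\cE_0(s_n u_n) \geq c_0 > 0$ (positivity of $c_0$ follows from the $\mu=0$ analogue of Lemma \ref{Pos_inf}, which is immediate since the arguments there are uniform in $\mu$). If instead $s_n \to 0$ along a subsequence, the boundedness of $\{u_n\}$ together with the estimate of Lemma \ref{D_estimate} forces $\cE_0(s_n u_n) \to 0 < c_0$, again a contradiction. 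Therefore $\{s_n\}$ is bounded, the displayed inequality gives $c_0 \leq \liminf_{n\to+\infty} c_n$, and the two bounds combine to yield $c_n \to c_0$.
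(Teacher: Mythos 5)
Your argument is correct and follows the same overall strategy as the paper: a two-sided squeeze on $c_n$ using the Nehari projections, with the crux being the boundedness of the projection parameter onto $\cN_0$. The one genuine difference is the upper bound $c_n \leq c_0$: you obtain it cleanly from the pointwise inequality $\cE_n \leq \cE_0$ and the min-max characterization $c = \inf_{u\neq 0}\sup_{t\geq 0}\cE(tu)$ of Theorem 3.1, whereas the paper instead projects the $\mu=0$ ground state $u_0$ onto $\cN_n$ (taking $s_n u_0 \in \cN_n$) and estimates from there; your route avoids the second projection altogether and is arguably more transparent. For the lower bound your $s_n$ is the paper's $t_n$, and your boundedness argument is in substance the same contradiction via (F3)/Fatou — you work with $\cD(s_n u_n)/s_n^q$ directly, the paper passes through $\cD'(t_n u_n)(t_n u_n)/t_n^q$ using the Ambrosetti--Rabinowitz-type inequality \eqref{AR-q}, but these two quantities are comparable by the same inequality, so the content is identical. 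Your additional discussion of $s_n \to 0$ is superfluous (one only needs $\{s_n\}$ bounded above to conclude, and in any case (J1) gives an automatic lower bound $s_n \geq r/\|u_n\|$), but it does no harm.
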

\begin{proof}
Consider $t_n > 0$ such that $t_nu_n \in \cN_0$; note that
\begin{equation}
\label{1}
	c_n = \cE_n(u_n) \geq \cE_n(t_nu_n) = \cE_0(t_nu_n) -\dfrac{\mu_nt_n^2}{2}\int_{\R^N}\dfrac{u_n^2}{|x|}\, dx \geq c_0 - \dfrac{\mu_nt_n^2}{2}\int_{\R^N}\dfrac{u_n^2}{|x|}\, dx.
\end{equation}
Now, take $s_n>0$ such that $s_nu_0 \in \cN_n$, then
\begin{equation}
\label{2}
	c_0 = \cE_0(u_0) \geq \cE_0(s_nu_0) = \cE_n(s_nu_0) +\dfrac{\mu_ns_n^2}{2}\int_{\R^N}\dfrac{u_0^2}{|x|}\, dx \geq c_n + \dfrac{\mu_ns_n^2}{2}\int_{\R^N}\dfrac{u_0^2}{|x|}\, dx.
\end{equation}
By \eqref{1} and \eqref{2} we obtain
\[
	c_0 \geq c_n + \dfrac{\mu_ns_n^2}{2}\int_{\R^N}\dfrac{u_0^2}{|x|}\, dx \geq c_n \geq c_0 - \dfrac{\mu_nt_n^2}{2}\int_{\R^N}\dfrac{u_n^2}{|x|}\, dx,
\]
that is
\[
	c_0 - \dfrac{\mu_nt_n^2}{2}\int_{\R^N}\dfrac{u_n^2}{|x|}\, dx \leq c_n \leq c_0.
\]
Note that $\{u_n\}_n$ is bounded in view of Lemma \ref{ground_bounded}. Hence, from Lemma \ref{Hardy}, we get that $\int_{\R^N}\dfrac{u_n^2}{|x|}\, dx $ stays bounded. Hence, to complete the proof, it is sufficient to show that $\{t_n\}_n$ is bounded.

Assume, by contradiction, that $t_n \to +\infty$ then, by the fact that $t_nu_n \in \cN_0$, we have
\begin{align*}
	\cE_0'(t_nu_n)(t_nu_n) &= t_n^2 Q(u_n) - \int_{\R^N\times\R^N}\dfrac{F(x,(t_nu_n)(x))f(y,(t_nu_n)(y))(t_nu_n)(y)}{|x-y|^{N-\al}}\, dx \, dy \\
	&\quad + t_n^q\int_{\R^N}K(x)|u_n|^q \, dx = 0.
\end{align*}
Hence,
\begin{align*}
\frac{Q(u_n)}{t_n^{q-2}} = \frac12 \frac{\cD'(t_n u_n)(t_n u_n)}{t_n^q} -  \int_{\R^N} K(x) |u_n|^q \, dx.
\end{align*}
Note that, in view of Sobolev embeddings and Lemma \ref{ground_bounded}, $\int_{\R^N} K(x) |u_n|^q \, dx$ stays bounded. Moreover, Lemma \ref{ground_bounded} and $q > 2$ imply that 
$$
\frac{Q(u_n)}{t_n^{q-2}} \to 0.
$$
Hence $\frac{\cD'(t_n u_n)(t_n u_n)}{t_n^q}$ is bounded. On the other hand, \eqref{AR-q}, (F3) and Fatou's lemma imply that
\begin{align*}
\frac{\cD'(t_n u_n)(t_n u_n)}{t_n^q} &= 2 \frac{1}{t_n^q} \int_{\R^N \times \R^N} \frac{F(x, t_n u_n(x)) f(y,t_n u_n(y)) t_n u_n(y)}{|x-y|^{N-\alpha}} \, dx \, dy  \\
&\geq 2 \cdot \frac{q}{2} \frac{1}{t_n^q} \int_{\R^N \times \R^N} \frac{F(x, t_n u_n(x)) F(y,t_n u_n(y))}{|x-y|^{N-\alpha}} \, dx \, dy  \\
&= q \int_{\R^N \times \R^N} \frac{ \frac{F(x, t_n u_n(x))}{t_n^{q/2}} \frac{F(y,t_n u_n(y))}{t_n^{q/2}} }{|x-y|^{N-\alpha}} \, dx \, dy \to +\infty,
\end{align*}
which is a contradiction.
\end{proof}

\begin{proof}[Proof of Theorem \ref{ThMain3}]
Suppose that 
\[
	\lim_{n \to +\infty}\sup_{z \in \R^N}\int_{B(z,1)} |u_n|^2dx = 0.
\]
From Lion's Concentration-Compactness principle we obtain 
\[
	u_n \to 0 \text{ in } L^t(\R^N) \text{ for all } t \in \left(2,\dfrac{2N}{N-1}\right). 
\]
Recall that
\[
	0 = \cE_n'(u_n)(u_n)=\|u_n\|^2_{\mu_n}- \frac12 \cD'(u_n)(u_n) + \int_{\R^N}K(x)|u_n|^q \, dx,
\]
therefore
\[
	\|u_n\|^2_{\mu_n} = \frac12 \cD'(u_n)(u_n) - \int_{\R^N}K(x)|u_n|^q \, dx.
\]
Applying the same computation as in \eqref{D'_est} we easily get that
\[
	\cD'(u_n)(u_n) \to 0 \quad \mbox{ as } n \to +\infty
\]
and by $(K)$ we have 
\[
	\left| \int_{\R^N}K(x)|u_n|^q \, dx \right| \leq |K|_\infty |u_n|_q^q \to 0 \quad \mbox{ as } n \to +\infty.
\] 
Hence, $\|u_n\|_{\mu_n} \to 0$ as $n \to +\infty$, and since 
\begin{align*}
0 \leq \left( [u_n]^2+|u_n|_2^2 \right) &\leq \frac{Q_{\mu_n} (u_n)}{\min \left\{ \frac{1}{2} C\left( N,\frac{1}{2}\right) - \frac{\mu_n}{C_{N,1/2,1/2}}, \essinf_{\R^N} V-m \right\}} \\ 
&\to \frac{0}{\min \left\{ \frac{1}{2} C\left( N,\frac{1}{2}\right), \essinf_{\R^N} V-m \right\}} = 0
\end{align*}
we have that $u_n \to 0$ in $H^{1/2}(\R^N)$.

In view of Lemma \ref{Pos_inf} we have
\[
	\cE_n(u_n) \geq \cE_n\left(r\frac{u_n}{|u_n|}\right) > \beta > 0
\]
for some constant $\beta$, and by Lemma \ref{ground_bounded}
\begin{equation*}
	\limsup_{n \to +\infty} \cE_n(u_n) = \limsup_{n \to +\infty} \left( -\frac12 \cD(u_n) \right) \leq 0
\end{equation*}
so we reach a contradiction.

Hence, there is a sequence $\{z_n\}_n \subset \Z^N$ such that 
\[
	\liminf_{n \to +\infty} \int_{B\left(z_n,1+\sqrt{N}\right)}|u_n|^2dx >0.
\]
In view of Lemma \ref{ground_bounded}, there is $u \in H^{1/2}(\R^N) \setminus \{0\}$ such that
\begin{align*}
	&u_n(\cdot + z_n) \weakto u \text{ in } H^{1/2}(\R^N), \notag\\
	&u_n(\cdot + z_n) \to u \text{ in } L^2_{\mathrm{loc}}(\R^N), \\
	&u_n(x + z_n) \to u(x) \text{ for a.e. } x \in \R^N.\notag
\end{align*}
Let $w_n=u_n(\cdot + z_n)$ and fix any $\vp \in \cC^{\infty}_0(\R^N)$. Observe that
\begin{align*}
	\cE'_0(w_n)(\vp) &= \cE'_n(u_n)(\vp(\cdot - z_n)) + \mu_n \int_{\R^N}\dfrac{u_n \vp(\cdot-z_n)}{|x|} \, dx\\
	&=\mu_n \int_{\R^N}\dfrac{u_n \vp(\cdot-z_n)}{|x|} \, dx.
\end{align*}
By \cite[Lemma 2.5]{B} and H\"older's inequality we have that
\[
	\int_{\R^N}\dfrac{|u_n(x)||\vp(x-z_n)|}{|x|} \, dx \to 0 \quad \mbox{ as } n \to +\infty,
\]
hence $\cE'_0(w_n)(\vp) \to 0$. From Corollary \ref{weak-conv-E} there follows that 
\[
	\cE_0'(w_n)(\vp) \to \cE_0'(u)(\vp)
\]
thus $u$ is a nontrivial critical point of $\cE_0$. In particular, $u \in \cN_0$. By \eqref{AR-q}, Lemma \ref{inf_lim} and Fatou's Lemma we have

\begin{align*}
c_0 &= \liminf_{n \to +\infty}\cE_n(u_n) = \liminf_{n \to +\infty}\left(\cE_n(u_n) - \dfrac{1}{q}\cE_n'(u_n)(u_n)\right)\\
	&= \liminf_{n \to +\infty} \left[ \left(\frac12 - \frac1q \right) Q(u_n) + \left( \frac1q - \frac12 \right) \mu_n \int_{\R^N} \frac{u_n^2}{|x|} \, dx \right. \\
	&\quad \left. + \frac1q \int_{\R^N \times \R^N} \frac{F(x,u_n(x)) \left( f(y,u_n(y)) u_n(y) - \frac{q}{2} F(y,u_n(y)) \right)}{|x-y|^{N-\alpha}} \, dx \, dy \right] \\
	&= \liminf_{n \to +\infty} \left[ \left(\frac12 - \frac1q \right) Q(w_n) + \left( \frac1q - \frac12 \right) \mu_n \int_{\R^N} \frac{u_n^2}{|x|} \, dx \right. \\
	&\quad \left. + \frac1q \int_{\R^N \times \R^N} \frac{F(x,w_n(x)) \left( f(y,w_n(y)) w_n(y) - \frac{q}{2} F(y,w_n(y)) \right)}{|x-y|^{N-\alpha}} \, dx \, dy \right] \\
	&\geq \left( \frac12 - \frac1q \right) Q(u) + \frac1q \int_{\R^N \times \R^N} \frac{F(x,u(x)) \left( f(y,u(y)) u(y) - \frac{q}{2} F(y,u(y)) \right)}{|x-y|^{N-\alpha}} \, dx \, dy \\
	&= \cE_0(u) - \frac1q \cE_0'(u)(u) = \cE_0(u) \geq c_0,
\end{align*}
where we used the weak lower semicontinuity of the norm $Q(\cdot)$ and the fact that $\mu_n \int_{\R^N} \frac{u_n^2}{|x|} \, dx \to 0$ from the Hardy inequality. Hence, $\cE_0(u) = c$ and $u \in H^{1/2}(\R^N)$ is a ground state solution for $\cE_0$.

\end{proof}

\section*{Acknowledgements}

Federico Bernini and Simone Secchi are members of INdAM-GNAMPA. Bartosz Bieganowski was partially supported by the National Science Centre, Poland (Grant No. 2017/25/N/ST1/00531).

\end{document}